\newtheorem{prob}{Problem}[section]
\begin{document}

\title{Existence result for a  strongly coupled problem with heat
convection term and Tresca's
law}

\author[Ahmed Bensedik  A. and Boukrouche M.,]{Bensedik
Ahmed\affil{1, 2}, 
       Boukrouche Mahdi\affil{1}}
 \address{\affilnum{1} \ Lyon University, F-42023 UJM, LaMUSE EA-3989, 23 Paul
Michelon 42023 Saint-Etienne, France.
 Email: {\tt Mahdi.Boukrouche@univ-st-etienne.fr}\\
 \affilnum{2}\ Faculty of Sciences,  Department of Mathematics, University of
Tlemcen, Algeria. E-mail: \tt  a\_bensedik@mail.univ-tlemcen.dz}
 \corraddr{Mahdi Boukrouche, Lyon University, F-42023 UJM,
                         LaMUSE EA-3989, 23 rue Paul Michelon 42023
Saint-Etienne,
                         France. Email: {\tt
Mahdi.Boukrouche@univ-st-etienne.fr}\\
         }
\received{3 october~ 2009}

\newtheorem{thm}{Theorem}[section]
 \newtheorem{cor}[thm]{Corollary}
 \newtheorem{lem}[thm]{Lemma}
 \newtheorem{prop}[thm]{Proposition}
 \newtheorem{defn}[thm]{Definition}
 \newtheorem{rem}[thm]{Remark}
\newtheorem{Example}{Example}[section]
\newcommand{\R}{\mathbb{R}}

\begin{abstract}
We study a problem describing the motion of an incompressible,
non-isothermal and non-Newtonian fluid, taking into account the heat convection
term.
The novelty here is that  fluid viscosity  depends on  the temperature,
the velocity of the fluid, and also of  the deformation tensor,
but not explicitly. The boundary conditions take into account the slip
phenomenon
on a part of the boundary of the domain. By using the notion of
pseudo-monotone operators and fixed point Theorem we prove an existence
result of its weak solution.
\end{abstract}

\keywords{Heat convection;
 Non-Newtonian fluid; Non-isothermal fluid; Tresca fluid-solid conditions;
Pseudo-monotone operators; Schauder point fixed Theorem.}
\msc{76A05, 76D50, 35Q35.}

\maketitle
%
\renewcommand{\theequation}{1.\arabic{equation}}
\setcounter{equation}{0}
\section{Introduction}\label{sec1}
Let $\omega$ be fixed bounded domain of $\R^2$, with Lipschitz continuous
boundary. We suppose that $\omega$ is the bottom of the fluid domain $\Omega$,
the upper surface $\Gamma_{1}$ is defined by the equation $x_{3}=h(x')$ where
$x'=(x_{1},x_{2})$ and $h$ is a positive smooth and bounded function. Then
\begin{equation*}
\Omega=\left\{\left(x',x_{3}\right )\in \R^3; \quad x'\in \omega \quad and \quad
0<x_{3}<h\left(x'\right)\right\}.
\end{equation*}
The boundary $\partial\Omega$ is composed of three parts;
$\partial\Omega=\overline{\omega}\cup\overline{\Gamma}_{1}\cup\overline{\Gamma}_
{L}$,
 where $\Gamma_{L}$ is the lateral boundary.
We consider a stationary problem, in the bounded domain $\Omega$, describing
the motion of an incompressible non-isothermal and non-Newtonian fluid.
This problem is deduced (see \cite{MB2009}) from the three conservation laws of,
mass,
momentum and energy, (see for example \cite{dlions, majda1984}),
where the density is assumed to be constant and equal to 1, so the mass
conservation
law becomes the incompressibility condition of the fluid
\begin{eqnarray}\label{3}
div\left(v\right)=0 \quad in \quad \Omega,
\end{eqnarray}
where $v$ is the velocity of the fluid.  Many fluid flows (molten polymers in
solution, oils,
sludge ...) do not verify Newton's law
$\sigma(v)=-\pi I+2\mu D(v)$, with $\mu =const.$,
 but a more complex in which the viscosity $\mu$ varies with the strain tensor
$D(v)$, the temperature $\theta$, or also the second invariant
 $D_{11}={1\over2}D(v):D(v)$.
We consider here that the heat conduction phenomenon is described by Fourier's
law, relating the heat flux to the temperature $\theta$, so the energy law leads
to
\begin{eqnarray}\label{2}
v.\nabla \theta=2\mu\left(\theta,v,
\left|D\left(v\right)\right|\right)D\left(v\right):D\left(v\right)+div(K\nabla
\theta)+r\left(\theta\right) \quad in \quad\Omega,
\end{eqnarray}
where $K$ is a positive function defined on $\Omega$ and $r$ is real function.

The motion of the fluid is assumed to be slow, then the momentum law leads to
\begin{eqnarray}\label{1}
-2 div\left(\mu\left(\theta,v,\left|D(v)\right|\right) D(v)\right)+\nabla \pi=f
\quad in \quad\Omega,
\end{eqnarray}
where $\pi$ is the pressure of the fluid, $f$ is a given vector and will be
specified later.\newline

Compared to the works \cite{gb-mb1}-\cite{sf1},  and  to the  earlier works,  to
our knowledge, the novelty in this study is firstly
that we take into account the effects
of the heat convection expressed by the presence of the left term in  (\ref{2}),
secondly we consider the viscosity $\mu$ of
the fluid as a function depending on its temperature,
its velocity and its strain tensor. This general choice of the viscosity allows
us to include the cases,
of power law  \cite{gwia, ge}, Carreau law \cite{tapiero2, tapiero} or Bingham
law \cite{dlions}.

This choice also allows to choose the appropriate viscosity that meets
industrial
 applications such that, for example, the manufacture of flak vest, containing a
fluid that has the ability to focus  on the impact of the projectile upon
contact
 with the flak vest.

See an other situation in \cite{malek1} where the paper concerns longtime and
large-data existence results for a generalization of the
Navier-Stokes fluid whose viscosity depends on the shear rate and the pressure
in the form $\nu= \nu(p , |D(v)|^{2})$. We can see  also   \cite{malek2}
 in which the authors considered  flows of fluids in  unbounded domains
with  viscosity having the same form as in   \cite{malek1}.\newline\

In order to close the system, it remains to describe the boundary conditions for
the temperature $\theta$ and the velocity $v$.
We consider the  following  Neumann  boundary condition
\begin{eqnarray}\label{11}
K\frac{\partial \theta}{\partial n}=\theta_{\omega},\quad on \quad \omega,
\end{eqnarray}
where $n=(n_{1},n_{2},n_{3})$ is the outward unit normal to $\partial\Omega$,
and $\theta_{\omega}$
 is a given fixed temperature flux on $\omega$,
and the following Dirichlet condition
\begin{eqnarray}\label{12}
\theta=0,\quad on \quad \Gamma_{1}\cup\Gamma_{L}.
\end{eqnarray}
For the boundary conditions for the velocity $v$,  let $g=(g_{1},g_{2},g_{3})$
be a function such that
\begin{eqnarray*}
\int_{\partial\Omega}g.n ds=0,\quad g_{3}=0 \quad on \quad \Gamma_{L},\quad g=0
\quad on \quad \Gamma_{1}, \quad g.n=0,\quad on\quad \omega,
\end{eqnarray*}
 the velocity on $\Gamma_{L}$ is known and parallel to the plane $(x_{1},x_{2})$
so,
\begin{eqnarray}\label{6}
v=g\quad on \quad \Gamma_{L},
\end{eqnarray}
the upper surface $\Gamma_{1}$ being assumed to be fixed so
\begin{eqnarray}\label{5}
v=0\quad on \quad \Gamma_{1},
\end{eqnarray}
we assume that there is no flux through $\omega$, so the normal component of
the velocity vanishes,
\begin{eqnarray}\label{7}
v.n=0\quad on \quad \omega,
\end{eqnarray}
but the tangential component $v_{t}$ of the velocity is unknown and satisfies
the Tresca law \cite{gb-mb1}, \cite{dlions} Chap.3,

\begin{eqnarray}\label{8}
\left\{
\begin{array}{ll}
\left|\sigma_{t}\right|=k \Rightarrow \exists\lambda \geq 0;
v_{t}=s-\lambda\sigma_{t},\\
\left|\sigma_{t}\right|<k \Rightarrow v_{t}=s.
\end{array}
\right.
\end{eqnarray}
Where $k$ is the upper limit for the stress, $s$ is the velocity of the surface
$\omega$ and $\sigma_{t}$ is the tangential component of $\sigma n$, where
$\sigma=(\sigma_{ij})_{1\leq i,j \leq 3}$ is the stress tensor defined by
\begin{eqnarray}\label{9}
\sigma_{ij}=2\mu\left(\theta,v,\left|D\left(v\right)\right|\right)d_{ij}
\left(v\right)-\pi\delta_{ij},
\end{eqnarray}
with $\delta_{ij}$ is the Kronecker symbol and $D(v)$ is the deformation tensor
given by
\begin{eqnarray}\label{10}
D(v)=(d_{ij}(v))_{1\leq i,j\leq 3}, \quad d_{ij}(v)=\frac{1}{2}(\partial_{j}
v_{i}+\partial_{i} v_{j}).
\end{eqnarray}
Here and below, we write $\partial_{i}$ to design $\frac{\partial }{\partial
x_{i}}$
and use the convention of implicit summation over repeated indices.

The term $\left|D(v)\right|$
denotes the euclidian norm of $D(v)$, that is
$\left|D(v)\right|^2=d_{ij}(v)d_{ij}(v)$,
 induced by the product $D(u):D(v)=d_{ij}(u)d_{ij}(v)$. \newline

The plan of this paper is as follows.
In Section \ref{secWF} we establish the variational formulation of the
considered problem
(\ref{3})-(\ref{8}). Note that the terms $\mu\left(\theta,v,
\left|D\left(v\right)\right|\right)D\left(v\right):D\left(v\right)$ and
 $v.\nabla \theta$ in (\ref{2}) and
$div\left(\mu\left(\theta,v,\left|D(v)\right|\right) D(v)\right)$
 in (\ref{1}) lead us to consider particular sets of the Sobolev spaces
$\left(W^{1,p}\left(\Omega\right)\right)^3$ and
$\left(W^{1,q}\left(\Omega\right)\right)^3$
where $p>3$ and $q$ its exponent conjugate, for details see the proof of
Proposition \ref{prob2.1}.
In the same section we give some lemmas needed for the next one to prove the
boundedness
and the coerciveness of the operator $A$ defined by
\begin{eqnarray*}
\left\langle
A(v),\varphi\right\rangle=\int_{\Omega}2\mu\left(\theta,v,|D(v)|\right)d_{ij}
(v)\partial_{j}\varphi_{i} dx.
\end{eqnarray*}
 In fact its coerciveness also follows from the assertion
\begin{eqnarray*}
\lim_{\left\|v\right\|_{1,2}\rightarrow
+\infty}\frac{\left\|v\right\|_{1,2}}{\left\|v\right\|_{1,p}}\neq0,
\end{eqnarray*}
which we prove in the present paper, for notations, see the next section.
Also note the fact that the function $\mu$ does not depend explicitly on its
arguments leads us to assume it monotone in $\left|D(v)\right|$.
This enable us to establish the monotonicity of the operator $A$.

In Section \ref{sec3} we study the existence results of the problem
\ref{prob2.1}, in the following
three subsections.

In Subsection \ref{sec3.1}, for given temperature $\theta\in W^{1,
q}_{\Gamma_{1}\cup\Gamma_{L}}(\Omega)$
and $f\in (W^{1, p}(\Omega))^{3}$, $0 \leq k \in L^{p}(\omega)$,
we prove in Theorem \ref{th3.1}, using Korn inequalities and classical results
of monotone operators,
 that there exists at least one
$v_{\theta}\in W^{1, p}_{div}(\Omega)$ solution of the intermediate problem
\ref{prob3.1}.
 We prove also in Lemma \ref{lem3.2} that $v_{\theta}$ remains bounded in
$W^{1, p}_{div}(\Omega)$, independently of the temperature $\theta$.
This is needed in Theorem \ref{th3.2} where we derive the existence of the
pressure $\pi\in L^{p}_{0}(\Omega)$ satisfying the varitional inequality
(\ref{14}).\newline

In Subsection \ref{sec3.2}, We consider a second intermediate problem
that, for given velocity $v\in V^{p}_{div}$ find the temperature solution of
Problem \ref{Sprob}.
 We remark here that from the weak formulation, the temperature $\theta$ must
be obtained in the space $W^{1,q}\left(\Omega\right)$. But this weak formulation
led us, after linearization of the corresponding equation, to a bilinear form
$B\left(\theta,\psi\right)$ defined on the space
$W^{1,q}\left(\Omega\right)\times W^{1,p}\left(\Omega\right)$.
And in order to apply Lax-Milgram Theorem, we have considered
the Hilbert space $H^1\left(\Omega\right)\times H^1\left(\Omega\right)$,
and we establish in Theorem \ref{thm3.3} the existence and uniqueness of
$\theta$ in $H^1\left(\Omega\right)$,
solution of the linearized problem \ref{pro3.2},
hence $\theta\in W^{1,q}\left(\Omega\right)$, because $1<q<2$, see other reasons
for this
choice of the space $H^1\left(\Omega\right)$ in the proof of Theorem
\ref{thm3.3}.

Using Schauder Fixed Point Theorem, we establish first in Theorem \ref{thm3.4},
 the existence of $\theta\in
H^{1}_{\Gamma_{1}\cup\Gamma_{L}}\left(\Omega\right)$
solution of the second intermediate problem (\ref{Sprob}), then
 we prove its uniqueness  in Theorem \ref{thm3.5} by a monotony method.

In Subsection \ref{sec3.3}, We recapitulate the necessary assumptions used to
prove the existence of at least one solution
of the variational global problem \ref{prob2.1}, and establish it in
Theorem \ref{thm3.6}.

\renewcommand{\theequation}{2.\arabic{equation}}
\setcounter{equation}{0}
\section{Weak formulation}\label{secWF}
Let $p$ be a real number such that $p>1$, and $q$ its conjugate exponent, that
is $\frac{1}{p}+\frac{1}{q}=1$. We know from Lemma 2.2 in \cite{gr} that for
$g\in \left(W^{1-\frac{1}{p},p}\left(\partial\Omega\right)\right)^3$, there
exists a function $G$ in
$\left(W^{1,p}\left(\Omega\right)\right)^3$ such that,
\begin{eqnarray*}
div(G)=0 \quad in\quad \Omega,\quad G=g \quad on \quad
\Gamma_{L}\cup\Gamma_{1},\quad G.n=0 \quad on \quad \omega.
\end{eqnarray*}
To establish a weak formulation of the problem, we introduce the following sets,
\begin{eqnarray*}
V^p=\left\{\varphi\in \left(W^{1,p}\left(\Omega\right)\right)^3; \quad \varphi=G
\quad on \quad \Gamma_{L},\quad \varphi=0 \quad on \quad \Gamma_{1} \quad and
\quad  \varphi.n=0 \quad on \quad \omega\right\},
\end{eqnarray*}
\begin{eqnarray*}
V^p_{div}=\left\{\varphi\in V^p; \quad div\left(\varphi\right)=0 \quad in \quad
\Omega\right\},
\end{eqnarray*}
$V^p$ and $V^p_{div}$ are convex closed subsets of
$\left(W^{1,p}\left(\Omega\right)\right)^3$.
We also define the spaces,
\begin{eqnarray*}
V^{p}_{0}=\left\{\varphi\in \left(W^{1,p}\left(\Omega\right)\right)^3; \quad
\varphi=0,  \quad on \quad \Gamma_{L}\cup\Gamma_{1}\quad and \quad \varphi.n=0
\quad on \quad \omega\right\}
\end{eqnarray*}
\begin{eqnarray*}
V^p_{0, div}=\left\{\varphi\in V^{p}_{0};\quad div\left(\varphi\right)=0\quad in
\quad \Omega\right\},
\end{eqnarray*}
\begin{eqnarray*}
L^p_{0}\left(\Omega\right)=\left\{u\in L^p\left(\Omega\right); \int_{\Omega}\
u(x)dx=0\right\},
\end{eqnarray*}
and we denote,
\begin{eqnarray*}
V^p_{\Gamma}=\left\{\varphi\in \left(W^{1,p}\left(\Omega\right)\right)^3;\quad
\varphi=0 \quad on \quad \Gamma\right\},
\end{eqnarray*}
\begin{eqnarray*}
W^{1,p}_{\Gamma}\left(\Omega\right)=\left\{\varphi\in
W^{1,p}\left(\Omega\right);\quad \varphi=0 \quad on \quad \Gamma\right\},
\end{eqnarray*}
where $\Gamma$ is a subset of $\partial\Omega$ with
$\left|\Gamma\right|:=meas(\Gamma)\neq0$.\newline
Remark that $V^p_{\Gamma}=\left(W^{1,p}_{\Gamma}\left(\Omega\right)\right)^3$
and $V^p, V^p_{0}\subset V^p_{\Gamma_1}$.
We denote, as usual the norm of the Lebesgue space $L^p\left(\Omega\right)$ by
$\left\|v\right\|_p=\left(\int_\Omega\left|v\right|^p dx \right)^\frac{1}{p}$,
and the norms of the Banach spaces $V^p_{\Gamma}$ and
$W^{1,p}_{\Gamma}\left(\Omega\right)$
are designed equally by
$\left\|v\right\|_{1,p}=\left(\int_\Omega\left|\nabla v\right|^p dx
\right)^\frac{1}{p}$.

  \bigskip

In the following proposition, we deduce rigorously the variational formulation
of Problem (\ref{3})-(\ref{8}).
Also note that we only give the necessary assumptions on the data of each part
of the paper. We summarize, the necessary hypotheses needed for the final result
in Subsection \ref{sec3.3}.

\begin{prop}
Let $p>3$ and $q$ its conjugate exponent. For $f\in(W^{1,p}(\Omega))^3$, $0\leq
k\in L^p(\omega)$,
$\mu\in L^{\infty}\left(\R\times\R^3\times \R_+\right)$, $K\in L^\infty(\Omega)$
and $r\in L^\infty\left(\R\right)$, the weak formulation of the strong problem
{\rm(\ref{3})-(\ref{8})} leads to the following variational problem,
\end{prop}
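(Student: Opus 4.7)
The plan is to derive the variational formulation of (\ref{3})-(\ref{8}) by the classical route of multiplying each strong equation by a test function, integrating over $\Omega$, and applying Green's formula; the Tresca friction condition on $\omega$ will force the velocity part to become a variational inequality rather than an equality.

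For the momentum equation (\ref{1}), I would pick $\varphi\in V^p_{div}$, take the scalar product with $\varphi-v$, and integrate over $\Omega$. Because $\varphi-v$ is divergence-free, the pressure contribution $\int_\Omega\nabla\pi\cdot(\varphi-v)\,dx$ disappears after an integration by parts: the remaining boundary integral $\int_{\partial\Omega}\pi(\varphi-v)\cdot n\,ds$ also vanishes since $\varphi=v$ on $\Gamma_1\cup\Gamma_L$ and $(\varphi-v)\cdot n=0$ on $\omega$. Green's formula applied to the viscous term produces the bulk expression $2\int_\Omega\mu(\theta,v,|D(v)|)D(v):D(\varphi-v)\,dx$ together with a surface integral which, by the same boundary reasoning and the decomposition $\sigma n=\sigma_n n+\sigma_t$, reduces to $-\int_\omega\sigma_t\cdot(\varphi_t-v_t)\,ds$. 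The Tresca law (\ref{8}) is then equivalent to the subdifferential inclusion $-\sigma_t\in k\,\partial|\cdot-s|(v_t)$, a short case analysis on $|\sigma_t|<k$ and $|\sigma_t|=k$ delivering the friction inequality
\begin{eqnarray*}
-\int_\omega\sigma_t\cdot(\varphi_t-v_t)\,ds\leq\int_\omega k\bigl(|\varphi_t-s|-|v_t-s|\bigr)\,ds,
\end{eqnarray*}
which converts the equality obtained so far into the usual variational inequality of the second kind.

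For the energy equation (\ref{2}), I would test against $\psi\in W^{1,p}_{\Gamma_1\cup\Gamma_L}(\Omega)$ and integrate. Green's formula on $div(K\nabla\theta)$ produces $\int_\Omega K\nabla\theta\cdot\nabla\psi\,dx$ minus a boundary contribution; the part on $\Gamma_1\cup\Gamma_L$ is killed by $\psi$, while the Neumann condition (\ref{11}) rewrites the part on $\omega$ as $\int_\omega\theta_\omega\psi\,ds$. The convection term $v\cdot\nabla\theta$, the dissipation term $2\mu|D(v)|^2$ and the source $r(\theta)$ are left in their natural form, giving the expected weak equation for $\theta$.

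The main obstacle is to justify that every integral in the resulting formulation is finite for the function-space pairings $V^p_{div}\times V^p_{div}$ on the velocity side and $W^{1,q}(\Omega)\times W^{1,p}(\Omega)$ on the temperature side. Here the restriction $p>3$ is decisive: it delivers the Sobolev embedding $W^{1,p}(\Omega)\hookrightarrow L^\infty(\Omega)$, which makes the convection term $\int_\Omega(v\cdot\nabla\theta)\psi\,dx$ meaningful by pairing $v,\psi\in L^\infty$ with $\nabla\theta\in L^q$, renders the dissipation term $\int_\Omega\mu|D(v)|^2\psi\,dx$ finite by pairing $|D(v)|^2\in L^{p/2}$ with $\mu\psi\in L^\infty$, and puts the trace $\varphi_t$ in $L^p(\omega)$ so that it pairs with $k\in L^p(\omega)$ and $\int_\omega k|\varphi_t-s|\,ds$ makes sense. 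Existence of a pressure $\pi\in L^p_0(\Omega)$ realizing (\ref{1}) in the sense of distributions is not part of this proposition and is postponed to the proof of Theorem~3.2 via de Rham-type arguments.
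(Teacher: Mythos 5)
Your proposal is correct and follows the same overall route as the paper: multiply the momentum equation by $\varphi-v$, integrate by parts, kill the boundary terms on $\Gamma_1\cup\Gamma_L$ and the normal part on $\omega$, and convert the remaining tangential term into the friction functional $j$ via a case analysis on Tresca's law (your subdifferential inclusion $-\sigma_t\in k\,\partial|\cdot-s|(v_t)$ is exactly the paper's complementarity identity $(v_t-s)\cdot\sigma_t+k|v_t-s|=0$ combined with $|\sigma_t|\le k$, and yields the same inequality $A\ge 0$). Your integrability discussion ($W^{1,p}\hookrightarrow L^\infty$ for $p>3$, $|D(v)|^2\in L^{p/2}$, the trace pairing with $k$) matches the paper's opening observations, and the treatment of the energy equation is the same.

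The one substantive difference is the handling of the pressure and of the test space. You test with $\varphi\in V^p_{div}$ and let the term $\int_\Omega\nabla\pi\cdot(\varphi-v)\,dx$ vanish, which produces the pressure-free inequality over divergence-free test functions --- that is Problem \ref{prob3.1}, inequality (\ref{25}), not the inequality (\ref{14}) stated in Problem \ref{prob2.1}. The paper instead keeps $\sigma_{ij}=2\mu\,d_{ij}(v)-\pi\delta_{ij}$ intact, tests against general $\varphi\in V^q$ (the exponent $q$ being forced by the duality $d_{ij}(v)\in L^p$ against $\partial_j\varphi_i\in L^q$), and uses only $div(v)=0$ so that the pressure survives as $-\left(\pi,div(\varphi)\right)$ in (\ref{14}). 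Your version is the standard and arguably cleaner starting point for the existence analysis, and you correctly note that recovering $\pi\in L^p_0(\Omega)$ is deferred to a de Rham argument (Theorem \ref{th3.2}); but strictly speaking your derivation stops one step short of the formulation the proposition asserts, since (\ref{14}) contains the pressure and ranges over non-solenoidal test functions. To land exactly on (\ref{14}) you should either keep the full stress tensor as the paper does, or add back the pressure term a posteriori once its existence is known.
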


\begin{prob}\label{prob2.1}
Find $v\in V^p_{div}$, $\pi\in L^p_0\left(\Omega\right)$ and $\theta\in
W^{1,q}_{\Gamma_{1}\cup\Gamma_{L}}\left(\Omega\right)$ such that
\begin{eqnarray}\label{14}
a\left(\theta, v, \varphi-v\right)-\left(\pi,
div(\varphi)\right)+j\left(\varphi\right)-j\left(v\right)\geq\left(f,
\varphi-v\right)\quad \forall \varphi\in V^q,
\end{eqnarray}
\begin{eqnarray}\label{15}
B\left(\theta,\psi\right)=L\left(\theta,\psi\right)\quad \forall\psi \in
W^{1,p}_{\Gamma_{1}\cup\Gamma_{L}}\left(\Omega\right),
\end{eqnarray}
\end{prob}
where
\begin{eqnarray*}
a\left(\theta, v,
\varphi-v\right)=\int_{\Omega}2\mu(\theta,v,|D(v)|)d_{ij}(v)\partial_{j}
\left(\varphi_{i}-v_{i}\right) dx,
\end{eqnarray*}
\begin{eqnarray}\label{L}
L\left(\theta,\psi\right)=2\int_{\Omega}\mu\left(\theta,v,
\left|D\left(v\right)\right|\right)\left|D\left(v\right)\right|^2 \psi
dx+\int_{\Omega}r\left(\theta\right)\psi dx+\int_{\omega}\theta_{\omega}\psi
dx',
\end{eqnarray}
and
\begin{eqnarray}\label{B}
B\left(\theta,\psi\right)=\int_{\Omega}K(x) \nabla\theta.\nabla\psi
dx+\int_{\Omega}\psi v_{i}\partial_{i}\theta dx,\quad
j\left(\varphi\right)=\int_{\omega}k\left|\varphi-s\right|dx'.
\end{eqnarray}

\begin{proof}
Observe first that since $p>3$ and $\Omega$ is bounded, if $v=(v_1, v_2, v_3)\in
V^p$
then $v_i\in L^\infty\left(\Omega\right)$.
Hence for $\psi \in W^{1,p}_{\Gamma_{1}\cup\Gamma_{L}}\left(\Omega\right)$
and $\theta\in W^{1,q}_{\Gamma_{1}\cup\Gamma_{L}}\left(\Omega\right)$
the second integral in $B\left(\theta,\psi\right)$ makes sense.
For the first integral in $L\left(\theta,\psi\right)$
we have $\left|D\left(v\right)\right|^2\in L^{\frac{p}{2}}\left(\Omega\right)$
then
$\mu\left(\theta,v,
\left|D\left(v\right)\right|\right)\left|D\left(v\right)\right|^2 \psi \in
L^1(\Omega)$
until $\mu$ is bounded.
Also $a\left(\theta, v, \varphi-v\right)$ is well defined since
 $d_{ij}(v)\in L^p\left(\Omega\right)$ and
$\partial_{j}\left(\varphi_{i}-v_{i}\right)\in L^q\left(\Omega\right) -
L^p\left(\Omega\right)\subset L^q\left(\Omega\right)$, because $1<q<p$.\newline
To obtain the variational inequality (\ref{14}), we have from (\ref{1}) and
(\ref{9})
\begin{eqnarray}\label{16}
-\partial_{j} \sigma_{ij}=f_{i},\quad i=1, 2, 3.
\end{eqnarray}
Multiplying (\ref{16}) by $\varphi_{i}-v_{i}$ where
$\varphi=\left(\varphi_{1},\varphi_{2},\varphi_{3}\right)\in V^q$ and
integrating over $\Omega$, we obtain
\begin{eqnarray}\label{17}
\int_{\Omega}\sigma_{ij}\partial_{j}\left(\varphi_{i}-v_{i}\right)
dx-\int_{\partial\Omega}\sigma_{ij}\left(\varphi_{i}-v_{i}\right)n_{j}ds=\int_{
\Omega}f_{i}\left(\varphi_{i}-v_{i}\right)dx.
\end{eqnarray}
Since $\varphi_{i}-v_{i}=0$ on $\Gamma_{1}\cup \Gamma_{L}$,
\begin{eqnarray*}
\int_{\partial\Omega}\sigma_{ij}\left(\varphi_{i}-v_{i}\right)n_{j}ds=\int_{
\omega}\sigma_{ij}\left(\varphi_{i}-v_{i}\right)n_{j}dx'.
\end{eqnarray*}
Remark that $\sigma_{ij} n_{j}$ is the i-th component of the vector $\sigma n$,
which can be written on the form $\sigma n=\sigma_{t}+\sigma_{n}n$,
with $\sigma_{t}=(\sigma_{t_{1}},\sigma_{t_{2}},\sigma_{t_{3}})$
and $\sigma_{n}=\sigma n.n$, from which we have
$\sigma_{ij} n_{j}=\sigma_{t_{i}}+\sigma_{n}n_{i}$.
Using this equality, we get
\begin{eqnarray*}
\int_{\omega}\sigma_{ij}\left(\varphi_{i}-v_{i}\right)n_{j}dx'
=\int_{\omega}\sigma_{t_{i}}\left(\varphi_{i}-v_{i}\right)dx'+\int_{\omega}
\sigma_{n}n_{i}\left(\varphi_{i}-v_{i}\right)dx'.
\end{eqnarray*}
Knowing that from (\ref{7}), $n_{i}\left(\varphi_{i}-v_{i}\right)=0$ on
$\omega$, then
\begin{eqnarray*}
\int_{\omega}\sigma_{ij}\left(\varphi_{i}-v_{i}\right)n_{j}dx'
=\int_{\omega}\sigma_{t_{i}}\left(\varphi_{i}-v_{i}\right)dx'
\end{eqnarray*}
and (\ref{17}) becomes
\begin{eqnarray}\label{19}
\int_{\Omega}\sigma_{ij}\partial_{j}\left(\varphi_{i}-v_{i}\right)dx
=\int_{\omega}\sigma_{t_{i}}\left(\varphi_{i}-v_{i}\right)dx'
+\int_{\Omega}f_{i}\left(\varphi_{i}-v_{i}\right)dx.
\end{eqnarray}
Let us involve the Tresca condition.
For this end, we add to both sides of (\ref{19}) the term
$\int_{\omega}k\left(\left|\varphi-s\right|-\left|v-s\right|\right)dx'$, then
\begin{eqnarray}\label{20}
\int_{\Omega}\sigma_{ij}\partial_{j}\left(\varphi_{i}-v_{i}\right)dx
+\int_{\omega}k\left(\left|\varphi-s\right|-\left|v-s\right|\right)dx'
=\int_{\Omega}f_{i}\left(\varphi_{i}-v_{i}\right)dx+A,
\end{eqnarray}
with
\begin{eqnarray*}
A=\int_{\omega}\left[\sigma_{t_{i}}\left(\varphi_{i}-v_{i}
\right)+k\left(\left|\varphi-s\right|-\left|v-s\right|\right)\right]dx'.
\end{eqnarray*}
Let us prove that $A$ is positive. First, Following \cite{dlions} Chap.3 page
140, we prove that the condition (\ref{8})
is equivalent to
\begin{eqnarray}\label{tc2}
(v_t-s).\sigma_{t}+k|v_t-s|=0,
\end{eqnarray}
indeed if (\ref{8}) holds and $|\sigma_{t}|=k$, then $v_t=s-\lambda \sigma_{t}$
for some$\lambda \geq 0$, so
\begin{eqnarray*}
(v_t-s).\sigma_{t}+k|v_t-s|=-\sigma_{t}.\sigma_{t}+k\lambda|\sigma_{t}|
=-\lambda\sigma_{t}^2+\lambda\sigma_{t}^2=0.
\end{eqnarray*}
Now if $|\sigma_{t}|<k$, by (\ref{8}) $v_t=s$ and (\ref{tc2}) holds.\newline
Conversely if $|\sigma_{t}|=k$, then by (\ref{tc2})
$(v_t-s).\sigma_{t}=-|\sigma_{t}||v_t-s|$, so there exists $\lambda \geq 0$ such
that
$v_t-s=-\lambda \sigma_{t}$, that is $v_t=s-\lambda \sigma_{t}$. The first part
of (\ref{8}) is shown. If $|\sigma_{t}|<k$, then by (\ref{tc2}) we have
$(v_t-s).\sigma_{t}+k|v_t-s|=0 \geq |v_t-s|(k-|\sigma_{t}|)$,
 thus $v_t-s=0$ because $|\sigma_{t}|<k$. The second part of (\ref{8}) holds,
and the assertion is proved.\newline Now by (\ref{7})
we deduce that $v=v_{t}$ on $\omega$, hence
\begin{eqnarray*}
A=\int_{\omega}(\sigma_{t}.(\varphi-s)+k|\varphi-s|)dx'.
\end{eqnarray*}
But $\sigma_{t}.(\varphi-s)\geq-|\sigma_{t}||\varphi-s|$, and since
$|\sigma_{t}|\leq k$ on $\omega$, it follows that
\begin{eqnarray*}
\sigma_{t}.(\varphi-s)+k|\varphi-s|\geq0\quad on \quad \omega,
\end{eqnarray*}
this shows that $A$ is positive. And (\ref{20}) becomes
\begin{eqnarray*}
\int_{\Omega}\sigma_{ij}\partial_{j}\left(\varphi_{i}-v_{i}\right)
dx+\int_{\omega}k\left(\left|\varphi-s\right|-\left|v-s\right|\right)dx'\geq
\int_{\Omega}f_{i}\left(\varphi_{i}-v_{i}\right)dx.
\end{eqnarray*}
Replacing $\sigma_{ij}$ by its expression (\ref{9}) and using (\ref{3}), we
obtain the variationnal inequality for the velocity field $v$. For all
$\varphi\in V^q$
\begin{eqnarray*}
\int_{\Omega}2\mu(\theta,v,|D(v)|)d_{ij}(v)\partial_{j}\left(\varphi_{i}-v_{i}
\right) dx
-\int_{\Omega}\pi div(\varphi)dx \nonumber\\
+\int_{\omega}k\left(\left|\varphi-s\right|-\left|v-s\right|\right)dx'\geq
\int_{\Omega}f_{i}\left(\varphi_{i}-v_{i}\right)dx.
\end{eqnarray*}
Similarly, by multiplying (\ref{2}) by $\psi\in
W^{1,p}_{\Gamma_{1}\cup\Gamma_{L}}\left(\Omega\right)$ we obtain (\ref{15}).
\end{proof}

Subsequently, we will use the following well known results,
\begin{eqnarray}\label{21}
\int_{\Omega}\left|D(u)\right|^2dx\leq\int_{\Omega}\left|\nabla u\right|^2dx,
\qquad
\forall u\in \left(W^{1,p}(\Omega)\right)^{3},
\end{eqnarray}
 \begin{eqnarray}\label{22}
\int_{\Omega}\left|D(u)\right|^2dx\geq \frac{1}{2}\int_{\Omega}\left|\nabla
u\right|^2dx, \qquad \forall u\in V_{0}^{p}.
\end{eqnarray}

\renewcommand{\theequation}{3.\arabic{equation}}
\setcounter{equation}{0}
\section{Existence results}\label{sec3}
 We assume that the function $\mu$ defined on $\R\times\R^3\times\R_{+}$ is such
that
\begin{eqnarray}\label{23}
\exists\mu_{0},\mu_{1}\in\R;\quad 0<\mu_{0}\leq\mu(t,u,s)\leq\mu_{1},\quad
\forall \left(t,u,s\right)\in\R\times\R^3\times\R_{+},
\end{eqnarray}
\begin{eqnarray}\label{24}
the \quad function \quad s\mapsto\mu(.,.,s)\quad is \quad monotone \quad on
\quad\R_{+}.
\end{eqnarray}

\subsection{First intermediate problem}\label{sec3.1}
From the variational inequality (\ref{14}), we obtain the following intermediate
problem

\begin{prob}\label{prob3.1}
For given $\theta \in W^{1,q}_{\Gamma_{1}\cup\Gamma_{L}}(\Omega)$ and
$f\in(W^{1,p}(\Omega))^3$,
we look for $v\in V_{div}^{p}$ satisfying the following variational inequality
\begin{eqnarray}\label{25}
a\left(\theta, v,
\varphi-v\right)+j\left(\varphi\right)-j\left(v\right)\geq\left(f,
\varphi-v\right),\quad \forall \varphi \in V_{div}^{q}.
\end{eqnarray}
\end{prob}
To solve this problem we will use the Nonlinear Operators Theory \cite{J.L}.
 We define the operator
\begin{eqnarray}\label{A}
A:V^p_{\Gamma_{1}}\rightarrow(V^p_{\Gamma_{1}})'\quad
by \quad \left\langle A(v),\varphi\right\rangle=a(\theta,v,\varphi),
\end{eqnarray}
where $\left\langle ., .\right\rangle$ is the duality brackets between
$(V^p_{\Gamma_{1}})'$ and $V^p_{\Gamma_{1}}$,
and we denote by $\Lambda_{V_{div}}$ the characteristic function of $V^p_{div}$,
\begin{eqnarray*}
\Lambda_{V_{div}}(u)=0\quad if\quad u\in V^p_{div},\quad \mbox{and}\quad
\Lambda_{V_{div}}(u)=+\infty\quad if \quad u\notin V^p_{div}.
\end{eqnarray*}
Then the variational inequality (\ref{25}) becomes,
\begin{eqnarray}\label{26}
\left\langle A(v), \varphi-v
\right\rangle+j\left(\varphi\right)+\Lambda_{V_{div}}(\varphi)
-j\left(v\right)-\Lambda_{V_{div}}(v)\geq\left(f,\varphi-v\right), \forall
\varphi\in V^q_{\Gamma_{1}}.
\end{eqnarray}

\begin{lemma}\label{lem3.1}
The operator $A$ defined by (\ref{A}) is bounded, hemicontinuous and monotone
on $V^p_{\Gamma_{1}}$.
\end{lemma}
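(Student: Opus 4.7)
The plan is to verify the three properties separately by pointwise and integral analysis of the integrand $2\mu(\theta,v,|D(v)|)d_{ij}(v)\partial_j\varphi_i$. For \emph{boundedness}, I first observe that the symmetry of $d_{ij}(v)$ lets me rewrite the integrand as $2\mu\,D(v){:}D(\varphi)$. Using the upper bound $\mu\leq\mu_1$ from (\ref{23}), H\"older's inequality in the $L^p$--$L^q$ duality, and the pointwise estimate $|D(w)|\leq|\nabla w|$ that underlies (\ref{21}), I obtain
\[
|\langle A(v),\varphi\rangle|\leq 2\mu_1\|\nabla v\|_p\|\nabla\varphi\|_q=2\mu_1\|v\|_{1,p}\|\varphi\|_{1,q},
\]
so $\|A(v)\|_{(V^p_{\Gamma_1})'}\leq 2\mu_1\|v\|_{1,p}$ and $A$ maps bounded sets to bounded sets.

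For \emph{hemicontinuity}, I would fix $u,w,\varphi\in V^p_{\Gamma_1}$ and a scalar sequence $t_n\to t$, set $v_n=u+t_n w$, $v=u+tw$, and note $v_n\to v$ strongly in $V^p$. Since $p>3$, the Sobolev embedding $W^{1,p}(\Omega)\hookrightarrow C(\overline\Omega)$ yields uniform (hence a.e.) convergence $v_n\to v$, while $D(v_n)\to D(v)$ in $L^p$ yields a.e.\ convergence of $|D(v_n)|$ along a subsequence. The monotonicity of $s\mapsto\mu(\theta,v,s)$ from (\ref{24}) leaves at most countably many discontinuities in $s$, so $\mu(\theta,v_n,|D(v_n)|)\to\mu(\theta,v,|D(v)|)$ a.e. Combined with the uniform bound (\ref{23}) and dominated convergence, the product $\mu(\theta,v_n,|D(v_n)|)d_{ij}(v_n)$ converges in $L^p$ to its natural limit, and pairing with $\partial_j\varphi_i\in L^q$ gives $\langle A(v_n),\varphi\rangle\to\langle A(v),\varphi\rangle$. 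Urysohn's subsequence principle promotes this to convergence of the full sequence.

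For \emph{monotonicity}, writing $\mu_u=\mu(\theta,u,|D(u)|)$ and $\mu_v=\mu(\theta,v,|D(v)|)$ and using the symmetry of $d_{ij}$, a direct expansion yields
\[
\frac{1}{2}\langle A(u)-A(v),u-v\rangle=\int_\Omega\bigl\{\mu_u|D(u)|^2+\mu_v|D(v)|^2-(\mu_u+\mu_v)D(u){:}D(v)\bigr\}\,dx.
\]
Applying $D(u){:}D(v)\leq|D(u)||D(v)|$ and regrouping, the integrand is at least
\[
(|D(u)|-|D(v)|)\bigl(\mu_u|D(u)|-\mu_v|D(v)|\bigr)
\]
pointwise a.e. Assumption (\ref{24}), together with the structural hypothesis flagged after its statement that the dependence of $\mu$ on $v$ is effectively through $|D(v)|$, makes $\mu_u$ and $\mu_v$ ordered in the same way as $|D(u)|$ and $|D(v)|$, so both factors share sign and the integral is nonnegative.

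The main obstacle is the monotonicity step. The upper bound on the cross term $D(u){:}D(v)$ is sharp, but the factored expression is only automatically nonnegative when the two factors are co-ordered; this forces a careful reading of (\ref{24}) and of the remark that $\mu$ does not explicitly depend on its first two arguments in a way that could disturb the ordering. Hemicontinuity, by contrast, is routine once the compact embedding afforded by $p>3$ is invoked, and boundedness is immediate from the $L^\infty$ bound on $\mu$.
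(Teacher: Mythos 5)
Your proof is correct and follows essentially the same route as the paper's. The boundedness estimate is identical (H\"older plus $\mu\le\mu_1$ and $|D(w)|\le|\nabla w|$), and the hemicontinuity argument is the paper's dominated-convergence argument dressed up with the embedding $W^{1,p}(\Omega)\hookrightarrow C(\overline\Omega)$; be aware that both you and the paper ultimately need $\mu$ to be continuous in its arguments (the paper simply declares the integrand ``obviously continuous''), and your ``countably many discontinuities'' patch does not fully close this, since $|D(u+tw)|(x)$ could land on a discontinuity value of $\mu$ for a set of $x$ of positive measure. The one genuine (if small) divergence is in the monotonicity step: after the common bound $D(u){:}D(v)\le|D(u)|\,|D(v)|$ you factor the integrand as $(|D(u)|-|D(v)|)(\mu_u|D(u)|-\mu_v|D(v)|)$, whereas the paper applies $2|D(u)|\,|D(v)|\le|D(u)|^2+|D(v)|^2$ once more and arrives at $(\mu_u-\mu_v)(|D(u)|^2-|D(v)|^2)$, which it declares nonnegative by (\ref{24}). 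Both final forms hinge on exactly the reading of (\ref{24}) that you flag --- that $\mu(\theta,u,|D(u)|)$ and $\mu(\theta,v,|D(v)|)$ are ordered like $|D(u)|$ and $|D(v)|$ even though the middle arguments differ, and that ``monotone'' means nondecreasing --- so your concern is legitimate, but it is a gap (or implicit hypothesis) shared with the paper's own proof rather than a defect of your argument; under that reading, together with $\mu>0$ from (\ref{23}), your factored expression is indeed nonnegative and the lemma follows.
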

\begin{proof}
For any $v\in V^p_{\Gamma_{1}}$ and $\varphi\in V^q_{\Gamma_{1}}$, we have
\begin{eqnarray*}
\left|\left\langle A(v),\varphi\right\rangle \right|
=\left|
\int_{\Omega}2\mu\left(\theta,v,|D(v)|\right)d_{ij}(v)\partial_{j}\varphi_{i} dx
\right|
\leq 2\mu_{1}\left|\int_{\Omega} d_{ij}(v)d_{ij}( \varphi ) dx\right|,
\end{eqnarray*}
using H$\ddot{\mbox {o}}$lder's and Minkowski's inequalities, and (\ref{23}), we
get
\begin{eqnarray*}
\left|\left\langle A(v),\varphi\right\rangle\right|
&\leq& 2\mu_{1}\int_{\Omega}\left(\sum^{3}_{i,j=1}
\left|d_{ij}(v)\right|^p \right)^\frac{1}{p} \left(\sum^{3}_{i,j=1}\left|d_{ij}
\left(\varphi\right)\right|^q \right)^\frac{1}{q}dx\\
&\leq&2\mu_{1}\int_{\Omega}\left(\sum^{3}_{i,j=1}\left|\partial_{j}
v_{i}\right|^p \right)^\frac{1}{p}
\left(\sum^{3}_{i,j=1}\left|\partial_{j}\varphi_{i}\right|^q
\right)^\frac{1}{q}dx
\end{eqnarray*}

so
\begin{eqnarray*}
\left|\left\langle A(v),\varphi\right\rangle\right|
\leq 2\mu_{1}\left(\int_{\Omega}\left|\nabla v\right|^p
dx\right)^\frac{1}{p}\left(\int_{\Omega}\left|\nabla\varphi\right|^q
dx\right)^\frac{1}{q}
\leq 2\mu_{1}\left\|v\right\|_{1,p}\left\|\varphi\right\|_{1,q} \quad\forall
\left(v,\varphi\right) \in V^p_{\Gamma_{1}}\times V^q_{\Gamma_{1}},
\end{eqnarray*}
then $A$ is bounded.\newline\\
We show that $A$ is hemicontinuous. For this, we prove that for any $u,v,w\in
V^p_{\Gamma_{1}}$, the function
\begin{eqnarray*}
\alpha:\R\rightarrow\R,\quad \alpha(t)=\left\langle A(u+tv),w\right\rangle
\end{eqnarray*}
is continuous.
We have
\begin{eqnarray*}
\alpha(t)=\int_{\Omega}2\mu\left(\theta,u+tv,
\left|D(u+tv)\right|\right)D\left(u+tv\right):D\left(w\right)dx.
\end{eqnarray*}
The function
\begin{eqnarray*}
t\mapsto
s(t)=2\mu\left(\theta,u+tv,
\left|D(u+tv)\right|\right)D\left(u+tv\right):D\left(w\right)
\end{eqnarray*}
is obviously continuous on $\R$. Let $(t_{n})$ be a sequence converging to $t$
in $\R$. Then $s_{n}:=s(t_{n})\in L^1(\Omega)$ and $(s_{n})$
 converges to $s(t)$ when $n$ goes to $+\infty$.\newline
The sequence $(t_{n})$ is bounded, so there exists $M>0$ such that
$\left|t_{n}\right|<M, \forall n\geq0$.\newline
Then we have
\begin{eqnarray*}
\left|s_{n}\right|\leq
g:=2\mu_{1}
\left(\left|D\left(u\right)\right|\left|D\left(w\right)\right| +
M\left|D\left(v\right)\right|
\left|D\left(w\right)\right|\right).
\end{eqnarray*}
Since $g\in L^1(\Omega)$ and is positive, by Dominated Convergence Theorem, we
deduce that $s(t)\in L^1(\Omega)$ and
\begin{eqnarray*}
\lim_{n\longrightarrow+\infty}\int_{\Omega}s_{n}dx=\int_{\Omega}s(t)dx,
\end{eqnarray*}
that is
\begin{eqnarray*}
\lim_{n\longrightarrow+\infty}\alpha\left(t_{n}\right)=\alpha\left(t\right).
\end{eqnarray*}
This shows that $A$ is hemicontinuous.
For the monotonicity of $A$, we establish that,
\begin{eqnarray*}
\left\langle A\left(u\right)-A\left(v\right),u-v\right\rangle\geq0,\quad
\forall u,v\in V^p_{\Gamma_{1}}.
\end{eqnarray*} We have,
\begin{eqnarray*}
\left\langle A\left(u\right)-A\left(v\right),u-v\right\rangle
=2\int_{\Omega}\left[\mu\left(\theta,u,\left|D\left(u\right)\right|\right)d_{ij}
\left(u\right)-\mu\left(\theta,v,\left|D\left(v\right)\right|\right)d_{ij}
\left(v\right)\right]\partial_{j}\left(u_{i}-v_{i}\right)dx\\
\end{eqnarray*}
we have also
\begin{eqnarray*}
\left\langle A\left(u\right)-A\left(v\right),u-v\right\rangle
&=&\int_{\Omega}2\mu\left(\theta,u,\left|D\left(u\right)\right|\right)d_{ij}
\left(u\right)\partial_{j}u_{i}dx+\int_{\Omega}2\mu\left(\theta,v,
\left|D\left(v\right)\right|\right)d_{ij}\left(v\right)\partial_{j}v_{i}dx
\\&-&\int_{\Omega}2\mu\left(\theta,u,\left|D\left(u\right)\right|\right)d_{ij}
\left(u\right)\partial_{j}v_{i}dx-\int_{\Omega}2\mu\left(\theta,v,
\left|D\left(v\right)\right|\right)d_{ij}\left(v\right)\partial_{j}u_{i}dx.
\end{eqnarray*}
By the fact that
\begin{eqnarray*}
d_{ij}\left(u\right)\partial_{j}v_{i}=d_{ij}\left(u\right)d_{ij}
\left(v\right)=D\left(u\right):D\left(v\right)\leq\left|D\left(u\right)
\right|\left|D\left(v\right)\right|
\end{eqnarray*}
 we obtain
\begin{eqnarray*}
\left\langle
A\left(u\right)-A\left(v\right),u-v\right\rangle&\geq&\int_{\Omega}
2\mu\left(\theta,u,
\left|D\left(u\right)\right|\right)\left|D\left(u\right)\right|^2dx+\int_{\Omega
}2\mu\left(\theta,v,
\left|D\left(v\right)\right|\right)\left|D\left(v\right)\right|^2dx\\&-&2\int_{
\Omega}\left(\mu\left(\theta,u,
\left|D\left(u\right)\right|\right)+\mu\left(\theta,v,
\left|D\left(v\right)\right|\right)\right)\left|D\left(u\right)\right|
\left|D\left(v\right)\right|dx.
\end{eqnarray*}
Now since,
$2\left|D\left(u\right)\right|\left|D\left(v\right)\right|\leq\left|D\left(u
\right)\right|^2+\left|D\left(v\right)\right|^2$
we get
\begin{eqnarray*}
\left\langle
A\left(u\right)-A\left(v\right),u-v\right\rangle\geq\int_{\Omega}
\left(\mu\left(\theta,u,\left|D\left(u\right)\right|\right)-\mu\left(\theta,v,
\left|D\left(v\right)\right|\right)\right)\left(\left|D\left(u\right)\right|^2-
\left|D\left(v\right)\right|^2\right)dx.
\end{eqnarray*}
Using (\ref{24}), we deduce that
\begin{eqnarray*}
\left\langle A\left(u\right)-A\left(v\right),u-v\right\rangle\geq0\quad \forall
u, v\in u,v\in V^p_{\Gamma_{1}},
\end{eqnarray*}
That is $A$ is monotone.
\end{proof}

Now we are in position to give the existence Theorem for Problem \ref{prob3.1}.
\begin{theorem}\label{th3.1}
Assume that (\ref{23}) and (\ref{24}) are satisfied, $f\in(W^{1,p}(\Omega))^3$
and $0\leq k\in L^p(\omega)$, then for fixed
 $\theta$ in
$W^{1,q}_{\Gamma_{1}\cup\Gamma_{L}}\left(\Omega\right)$ there exists
$v_{\theta}\in V^p_{div}$ solution of (\ref{25}).
\end{theorem}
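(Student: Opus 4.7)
The plan is to recast the variational inequality (\ref{25}) in the standard form to which the classical existence theory of Lions \cite{J.L} for monotone operators combined with convex, lower semicontinuous functionals applies. Concretely, I would work with the reformulation (\ref{26}) on the ambient Banach space $V^p_{\Gamma_1}$, in which the constraint $v\in V^p_{div}$ is absorbed in the convex indicator $\Lambda_{V_{div}}$ and the friction term appears through $j$. Writing $\Phi(\varphi):=j(\varphi)+\Lambda_{V_{div}}(\varphi)$, the problem reduces to: find $v\in V^p_{\Gamma_1}$ with
$$\langle A(v),\varphi-v\rangle+\Phi(\varphi)-\Phi(v)\ge (f,\varphi-v)\quad \forall\varphi\in V^q_{\Gamma_1}.$$

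Next I would verify the four structural hypotheses required by the abstract theorem. The convexity and properness of $\Phi$ are immediate; the lower semicontinuity of $j$ follows from the continuity of the trace $V^p_{\Gamma_1}\to L^p(\omega)$ and the triangle inequality, while that of $\Lambda_{V_{div}}$ comes from the fact that $V^p_{div}$ is closed in $V^p_{\Gamma_1}$. Boundedness, hemicontinuity and monotonicity of $A$ are already supplied by Lemma \ref{lem3.1}.

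The remaining and, in my view, central obstacle is the coercivity of $(A,\Phi)$ in the sense
$$\frac{\langle A(v)-f,v-v_0\rangle+\Phi(v)-\Phi(v_0)}{\|v\|_{1,p}}\longrightarrow +\infty\quad\text{as}\quad \|v\|_{1,p}\to +\infty,$$
for some fixed $v_0\in V^p_{div}$ (for instance $v_0=G$). Using assumption (\ref{23}), the monotone term dominates as $\langle A(v),v\rangle\ge 2\mu_0\int_\Omega|D(v)|^2\,dx$. After the shift $w=v-G\in V^p_{0,div}$, Korn's inequality (\ref{22}) yields $\int_\Omega|D(w)|^2\,dx\ge \tfrac12\|w\|_{1,2}^2$. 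This only gives control in the $W^{1,2}$-norm, so the decisive step is to invoke the announced assertion $\liminf_{\|w\|_{1,p}\to\infty}\|w\|_{1,2}/\|w\|_{1,p}>0$, which upgrades the bound to one of order $\|v\|_{1,p}^2$. The cross terms involving $D(v_0)$, the linear functional $f$, and $j$ (controlled by the trace theorem as $|j(v)|\le C(\|v\|_{1,p}+1)$) all grow at most linearly in $\|v\|_{1,p}$, so the quadratic lower bound wins and coercivity follows.

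Once these hypotheses are in place, the existence of $v_\theta\in V^p_{\Gamma_1}$ solving the reformulated inequality is furnished by the abstract result of \cite{J.L} (Chap.~2), and the inclusion $\Phi(v_\theta)<+\infty$ forces $v_\theta\in V^p_{div}$, concluding the proof. The only genuinely delicate ingredient is the $\|v\|_{1,2}/\|v\|_{1,p}$ assertion flagged in the introduction; all the other steps are of a standard monotone-operator flavour.
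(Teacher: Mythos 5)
Your proposal follows essentially the same route as the paper: recast (\ref{25}) as (\ref{26}) with the convex, proper, l.s.c.\ functional $j+\Lambda_{V_{div}}$ on $V^p_{\Gamma_1}$, use Lemma \ref{lem3.1} for boundedness, hemicontinuity and monotonicity (hence pseudo-monotonicity) of $A$, obtain coercivity via the shift by $G$, the bound (\ref{23}), Korn's inequality (\ref{22}) and the norm-ratio assertion (\ref{28}), and conclude with the abstract existence theorem of Lions. The one ingredient you invoke rather than prove, namely $\liminf \|v\|_{1,2}/\|v\|_{1,p}>0$, is exactly the assertion the paper argues (by normalizing $w=v\|v\|_{1,p}^{-1}$) inside this very proof, so the two arguments coincide in substance.
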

\begin{proof}
From Lemma \ref{lem3.1}, we conclude that the operator $A$ is pseudo-monotone.
The function
 $v \mapsto j\left(v\right)+\Lambda_{V_{div}}(v)$ is convex, proper and lower
semi-continuous. Let us check the coercivity condition, that is
\begin{eqnarray*}
\exists v^*\in V^p_{\Gamma_{1}}\quad such \quad that\quad
j\left(v^*\right)+\Lambda_{V_{div}}(v^*)<+\infty
\end{eqnarray*}
and
\begin{eqnarray*}
\lim_{\left\|v\right\|_{1,p}\rightarrow +\infty} \frac{\left\langle
A(v),v-v^*\right\rangle+j\left(v\right)
+\Lambda_{V_{div}}(v)}{\left\|v\right\|_{1,p}}=+\infty.
\end{eqnarray*}
We choose $v^*=G$, and observe, from their explicit expressions, that
\begin{eqnarray}\label{ac}
\mbox{if} \quad\lim_{\|v\|_{1,p}\rightarrow\infty} \frac{\langle A(v-G),v-G
\rangle}{\|v\|_{1,p}} = +\infty
\quad
\mbox{then} \quad \lim_{\|v\|_{1,p}\rightarrow\infty}\frac{\langle A(v),v-G
\rangle}{\|v\|_{1,p}}= +\infty.
\end{eqnarray}
Indeed
\begin{eqnarray*}
\left\langle A(v-G),v-G\right\rangle&=&
2\int_{\Omega}\mu(\theta,v-G,
|D(v-G)|)\left(\left|D(v)\right|^2-2D(v):D(G)\right)dx
\\&+&2\int_{\Omega}\mu(\theta,v-G,|D(v-G)|)\left|D(G)\right|^2dx,
\end{eqnarray*}
and
\begin{eqnarray*}
\left\langle
A(v),v-G\right\rangle=2\int_{\Omega}\mu(\theta,v,
|D(v)|)\left(\left|D(v)\right|^2-D(v):D(G)\right)dx.
\end{eqnarray*}
Now since the function $\mu$ is positive and bounded on $\R\times\R^3\times\R_+$
and  $\left\|G\right\|_{1,2}^2 \left\|v\right\|^{-1}_{1,p}\rightarrow0$ when
$\left\|v\right\|_{1,p}\rightarrow\infty$, (\ref{ac}) follows.
Then we can use here $\left\langle A(v-G),v-G\right\rangle$ instead of
$\left\langle A(v),v-G\right\rangle$. For $v\in V^p_{\Gamma_1}$, by positivity
of the function $j+\Lambda_{V_{div}}$ and (\ref{23}) we obtain
\begin{eqnarray*}
\left\langle A(v-G),v-G\right\rangle+j\left(v\right)+\Lambda_{V_{div}}(v)
\geq2\mu_0\int_{\Omega}|D(v-G)|^2dx.
\end{eqnarray*}
We have $v-G\in V_0^{p}$ then by (\ref{22}), we get
\begin{eqnarray*}
\left\langle A(v-G),v-G\right\rangle+j\left(v\right)+\Lambda_{V_{div}}(v)
\geq \mu_{0}\left\|v-G\right\|^2_{1,2}
 \end{eqnarray*}
and by Cauchy-Schwarz inequality
\begin{eqnarray*}
 \mu_{0}\left\|v-G\right\|^2_{1,2}
\geq\mu_{0}\left\|v\right\|^2_{1,2}-2\mu_0\left\|v\right\|_{1,2}
\left\|G\right\|_{1,2}
+\mu_0\left\|G\right\|^2_{1,2},
\end{eqnarray*}
so, we obtain
\begin{eqnarray}\label{27}
\frac{\left\langle A(v-G), v-G\right\rangle +
j(v)+\Lambda_{V_{div}}(v)}{\|v\|_{1,p}}
\geq
\mu_{0}\left(\|v\|_{1,2}\frac{\|v\|_{1,2}}{\|v\|_{1,p}}
-2 \|G\|_{1,2}\frac{\|v\|_{1,2}}{\|v\|_{1,p}}
+\frac{\|G\|^2_{1,2}}{\|v\|_{1,p}}\right).
\end{eqnarray}

By the continuity of the embedding $V^p_{\Gamma_{1}}\subset V^2_{\Gamma_{1}}$,
we deduce that there exists a constant $c>0$ depending only on $\Omega$ and $p$
such that $\left\|v\right\|_{1,2}\leq c\left\|v\right\|_{1,p}$. Letting
$\left\|v\right\|_{1,2}\rightarrow\infty$,  then
$\left\|v\right\|_{1,p}\rightarrow\infty$ and we have
\begin{eqnarray}\label{28}
\lim_{\left\|v\right\|_{1,2}\rightarrow +\infty}\frac{\left\|v\right\|_{1,2}}
{\left\|v\right\|_{1,p}}\neq 0,
\end{eqnarray}
indeed, for $v\in V^p_{\Gamma_{1}}$ and $v\neq 0$, set $w=v
\left\|v\right\|_{1,p}^{-1}$ then
\begin{eqnarray}\label{29}
\left\|w\right\|_{1,p}=1.
\end{eqnarray}
But
\begin{eqnarray*}
\lim_{\left\|v\right\|_{1,2}\rightarrow
+\infty}\frac{\left\|v\right\|_{1,2}}{\left\|v\right\|_{1,p}}=0
\Longrightarrow
\lim_{\left\|v\right\|_{1,2}\rightarrow +\infty}\left\|w\right\|_{1,2}=0.
\end{eqnarray*}
This is a contradition with (\ref{29}), so (\ref{28}) holds.
Now by letting $\left\|v\right\|_{1,2}\rightarrow\infty$,
we get $\left\|v\right\|_{1,p}\rightarrow\infty$ and  we can deduce, by
(\ref{28}),
that the right side hand of (\ref{27}) tends to $+\infty$.
consequently
\begin{eqnarray*}
\frac{\left\langle
A(v-G),v-G\right\rangle+j\left(v\right)+\Lambda_{V_{div}}(v)}{\left\|v\right\|_{
1,p}}\rightarrow\ +\infty.
\end{eqnarray*}
So by (\ref{ac}),
\begin{eqnarray*}
\frac{\left\langle
A(v),v-G\right\rangle+j\left(v\right)+\Lambda_{V_{div}}(v)}{\left\|v\right\|_{1,
p}}\rightarrow\ +\infty.
\end{eqnarray*}
This shows that the coercivity condition is satisfied.
Applying Theorem 8.5 chap. 2 in \cite{J.L}, we conclude that (\ref{26}) and
hence (\ref{25}) admits a solution $v_{\theta}$ in the space $V^p_{div}$.
\end{proof}

Before stating the existence Theorem of the pressure we need to prove the
following lemma.

\begin{lemma}\label{lem3.2}
The solution $v_{\theta}$ of Problem 3.1 is bounded in $V^p_{div}$ independently
of the temperature $\theta$.
\end{lemma}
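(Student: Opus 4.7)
The plan is to test the variational inequality (\ref{25}) at the admissible choice $\varphi = G$ (note $G \in V^p_{div} \subset V^q_{div}$ since $p>q$ and $\Omega$ is bounded) and then to confront the resulting upper bound with the coercivity estimate already worked out in the proof of Theorem \ref{th3.1}, checking that every constant appearing is independent of $\theta$.

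Substituting $\varphi = G$ in (\ref{25}) and using $j(v_\theta)\geq 0$ yields
\begin{eqnarray*}
\langle A(v_\theta), v_\theta - G\rangle + j(v_\theta) \;\leq\; j(G) + (f, v_\theta - G).
\end{eqnarray*}
The right-hand side is controlled by $j(G) + C\|f\|_{1,p}\|v_\theta - G\|_{1,p}$, via Hölder's inequality and the continuous embedding $L^p(\Omega) \hookrightarrow L^q(\Omega)$. Hence the right-hand side is bounded above by $C_1 + C_2\|v_\theta\|_{1,p}$, where $C_1, C_2$ depend only on $j(G)$, $\|f\|_{1,p}$, $\|G\|_{1,p}$, and in particular not on $\theta$.

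For the lower bound on the left-hand side, I reuse the machinery of the proof of Theorem \ref{th3.1}: the identity (\ref{ac}), the Korn-type inequality (\ref{22}) applied to $v_\theta - G \in V^p_0$, and assumption (\ref{23}) give
\begin{eqnarray*}
\langle A(v_\theta - G), v_\theta - G\rangle + j(v_\theta) \;\geq\; \mu_0 \|v_\theta - G\|^2_{1,2},
\end{eqnarray*}
while the observation (\ref{ac}) transfers this coercivity to $\langle A(v_\theta), v_\theta - G\rangle + j(v_\theta)$. Combining with (\ref{28}), the quotient $(\langle A(v_\theta), v_\theta - G\rangle + j(v_\theta))/\|v_\theta\|_{1,p}$ tends to $+\infty$ as $\|v_\theta\|_{1,p}\to\infty$, with a rate governed solely by $\mu_0, \mu_1, \|G\|_{1,p}$.

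Now suppose, for contradiction, that there exists a sequence $\theta_n$ with $\|v_{\theta_n}\|_{1,p} \to +\infty$. Dividing the inequality above by $\|v_{\theta_n}\|_{1,p}$, the left-hand side tends to $+\infty$ whereas the right-hand side stays bounded by $C_2 + C_1/\|v_{\theta_n}\|_{1,p} \to C_2 < \infty$, which is impossible. Hence $\|v_\theta\|_{1,p}$ is bounded by a constant $M$ independent of $\theta$. The main subtlety is ensuring that none of the constants in the coercivity argument depends on $\theta$; this holds because (\ref{23}) is uniform in its first argument and (\ref{28}) is a purely functional-analytic property of the spaces $V^p_{\Gamma_1} \subset V^2_{\Gamma_1}$.
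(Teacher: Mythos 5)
Your proposal is correct and follows essentially the same route as the paper's proof: both test (\ref{25}) with $\varphi=G$, bound the right-hand side linearly in the norm of $v_\theta$ with constants depending only on $f$, $G$ and $j(G)$, and obtain the lower bound $\mu_0\|v_\theta-G\|^2_{1,2}$ from (\ref{22})--(\ref{23}) before invoking (\ref{28}) to convert the resulting $\|v_\theta\|_{1,2}$ control into a bound on $\|v_\theta\|_{1,p}$. The only difference is organizational: the paper writes out the explicit quadratic inequality (\ref{31}) in $\|v_\theta\|_{1,2}$ and then transfers to the $W^{1,p}$ norm, whereas you package the same ingredients as a single contradiction against the coercivity quotient of Theorem \ref{th3.1}.
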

\begin{proof}
As $v_{\theta}$ satisties the following variational inequality
\begin{eqnarray*}
a\left(\theta, v_{\theta}, \varphi-
v_{\theta}\right)+j\left(\varphi\right)-j\left(v_{\theta}\right)\geq\left(f,
\varphi-v_{\theta}\right),\quad \forall \varphi \in V_{div}^{q},
\end{eqnarray*}
by taking $\varphi=G\in V_{div}^{p}\subset V_{div}^{q}$, we get
\begin{eqnarray}\label{rm}
\left\langle
A(v_{\theta}),v_{\theta}-G\right\rangle\leq\left(f,v_{\theta}\right)-\left(f,
G\right)+j\left(G\right),
\end{eqnarray}
because $j$ is positive. Remark that we can write
\begin{eqnarray*}
\left\langle A(v_{\theta}),v_{\theta}-G\right\rangle
&=&2\int_{\Omega}\mu(\theta,v_{\theta},|D(v_{\theta})|)d_{ij}(v_{\theta}
-G)\partial_j(v^{i}_{\theta}-G_i)dx\\&+&2\int_{\Omega}\mu(\theta,v_{\theta},
|D(v_{\theta})|)d_{ij}(G)\partial_j(v^{i}_{\theta}-G_i)dx,
\end{eqnarray*}
where $v^{i}_{\theta}$ is the i-th component of $v_{\theta}$. Then
\begin{eqnarray}\label{im}
\left\langle
A(v_{\theta}),v_{\theta}-G\right\rangle&=&2\int_{\Omega}\mu(\theta,v_{\theta},
|D(v_{\theta})|)\left(\left|D(v_{\theta}-G)\right|^2+D(G):D(v_{\theta}
)\right)dx\nonumber\\&-&2\int_{\Omega}\mu(\theta,v_{\theta},|D(v_{\theta}
)|)\left|D(G)\right|^2dx.
\end{eqnarray}
Now from  (\ref{23}),  (\ref{rm}),  (\ref{im}),
(\ref{22}), (\ref{21}) and H$\ddot{\mbox{o}}$lder's inequality we get
\begin{eqnarray*}
\mu_0\left\|v_{\theta}-G\right\|_{1,2}^2&\leq&2\mu_1\left\|v_\theta\right\|_{1,2
}\left\|G\right\|_{1,2}+\left\|f\right\|_{W^{1,p}}\left\|v_\theta\right\|_{1,q}
\\&+&2\mu_{1}\left\|G\right\|^2_{1,2}+\left\|f\right\|_{W^{1,p}}
\left\|G\right\|_{1,q}+j(G),
\end{eqnarray*}
hence
\begin{eqnarray}\label{30}
\mu_{0}\left\|v_\theta\right\|^2_{1,2}\leq2(\mu_{0}+\mu_{1}
)\left\|v_\theta\right\|_{1,2}\left\|G\right\|_{1,2}&+&\left\|f\right\|_{W^{1,p}
}\left\|v_\theta\right\|_{1,q}+2\mu_{1}\left\|G\right\|^2_{1,2}
\nonumber\\&+&\left\|f\right\|_{W^{1,p}}\left\|G\right\|_{1,q}+j(G).
\end{eqnarray}
From the continuous embedding $V_{\Gamma_1}^{2}\subset V_{\Gamma_1}^{q}$, there
exists a positive constant $\beta$ such that,
\begin{eqnarray*}
\left\|v_\theta\right\|_{1,q}\leq \beta \left\|v_\theta\right\|_{1,2}.
\end{eqnarray*}
 Then (\ref{30}) becomes
\begin{eqnarray}\label{31}
\mu_{0}\left\|v_\theta\right\|^2_{1,2}\leq2(\mu_{0}+\mu_{1}
)\left\|v_\theta\right\|_{1,2}\left\|G\right\|_{1,2}&+&\beta\left\|f\right\|_{W^
{1,p}}\left\|v_\theta\right\|_{1,2}+2\mu_{1}\left\|G\right\|^2_{1,2}
\nonumber\\&+&\left\|f\right\|_{W^{1,p}}\left\|G\right\|_{1,q}+j(G).
\end{eqnarray}
By (\ref{31}) we deduce that there exists a positive constant $C$ independent of
$\theta$ such that
\begin{eqnarray}\label{32}
\left\|v_\theta\right\|_{1,2}\leq C.
\end{eqnarray}
 Indeed, on the contrary, by dividing the two sides of (\ref{31}) by
$\left\|v_\theta\right\|^2_{1,2}$ and letting
$\left\|v_\theta\right\|_{1,2}\rightarrow +\infty$ we obtain
$\mu_{0}\leq 0$.
This is a contradiction because $\mu_{0}>0$, then (\ref{32}) holds.\newline\\
By (\ref{28}), $\left\|v_\theta\right\|^{-1}_{1,2}\left\|v_\theta\right\|_{1,p}$
is bounded for $\left\|v_\theta\right\|_{1,2}$ large enough, so it follows from
(\ref{32}), the existence of a positive constant $C'$ independent of $\theta$
such that
\begin{eqnarray}\label{33}
\left\|v_\theta\right\|_{1,p}\leq C'.
\end{eqnarray}
 The lemma is proved.
\end{proof}

\begin{theorem}\label{th3.2}
Under the assumptions of {\rm Theorem \ref{th3.1}}, there exists a unique
$\pi\in L^p_0\left(\Omega\right)$ satisfying equation (\ref{14}).
\end{theorem}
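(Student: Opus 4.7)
The plan is to extract the pressure from (\ref{25}) via the classical De Rham/Necas framework, adapting the test functions so as to freeze the Tresca term and preserve the boundary conditions. The argument proceeds in three steps.

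First, I restrict attention to divergence-free test fields supported away from $\partial\Omega$. For any $\psi \in (\mathcal{D}(\Omega))^3$ with $div(\psi) = 0$, the fields $\varphi = v_\theta \pm \psi$ both belong to $V^q_{div}$, and $j(\varphi) = j(v_\theta)$ because $\psi|_\omega = 0$. Hence (\ref{25}) collapses to the equality $a(\theta, v_\theta, \psi) = (f, \psi)$ for every such $\psi$. This says the distribution $A(v_\theta) - f \in (W^{-1,p}(\Omega))^3$ (bounded by Lemma \ref{lem3.1} and $f \in (W^{1,p}(\Omega))^3$) annihilates every divergence-free compactly supported field. The $L^p$-version of De Rham's theorem, valid on bounded Lipschitz domains via the Necas inequality, then produces a unique $\pi \in L^p_0(\Omega)$ such that
\begin{equation*}
a(\theta, v_\theta, \psi) - (\pi, div(\psi)) = (f, \psi) \quad \forall \psi \in (W^{1,q}_0(\Omega))^3.
\end{equation*}

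Second, I upgrade this identity to the variational inequality (\ref{14}) on the full set $V^q$. Given $\varphi \in V^q$, put $w = \varphi - v_\theta \in V^q_0$; the boundary conditions $w = 0$ on $\Gamma_1 \cup \Gamma_L$ and $w \cdot n = 0$ on $\omega$ force $\int_\Omega div(w)\, dx = 0$, so Bogovskii's operator delivers $\tilde w \in (W^{1,q}_0(\Omega))^3$ with $div(\tilde w) = div(w)$. Then $\varphi_0 := \varphi - \tilde w$ lies in $V^q_{div}$ (the boundary conditions of $\varphi$ survive because $\tilde w$ vanishes on $\partial\Omega$, and the divergence cancels by construction), and $j(\varphi_0) = j(\varphi)$ since $\tilde w|_\omega = 0$. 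Inserting $\varphi_0$ into (\ref{25}) and substituting the first-step identity with $\psi = \tilde w$ to replace $a(\theta, v_\theta, \tilde w) - (f, \tilde w)$ by $(\pi, div(\tilde w)) = (\pi, div(\varphi))$ eliminates the auxiliary field and reproduces exactly (\ref{14}).

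Uniqueness follows by the same lifting: if two pressures $\pi_1, \pi_2 \in L^p_0(\Omega)$ both satisfy (\ref{14}), testing each with $\varphi = v_\theta \pm \tilde w$ for $\tilde w \in (W^{1,q}_0(\Omega))^3$ turns both inequalities into equations whose subtraction yields $(\pi_1 - \pi_2, div(\tilde w)) = 0$ for every such $\tilde w$; surjectivity of $div: (W^{1,q}_0(\Omega))^3 \to L^q_0(\Omega)$ then forces $\pi_1 - \pi_2$ to be constant, hence zero by the zero-mean condition.

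The main obstacle is the $L^p$-version of De Rham's theorem with $p > 3$: unlike the $L^2$ setting where the Hodge decomposition is immediate, the general $L^p$ case on a bounded Lipschitz domain hinges on the Necas inequality $\|\pi\|_{L^p_0(\Omega)} \leq C\|\nabla \pi\|_{(W^{-1,p}(\Omega))^3}$, which is equivalent to the continuity of the Bogovskii operator from $L^q_0(\Omega)$ into $(W^{1,q}_0(\Omega))^3$. Once this tool is available, the fact that $\tilde w$ vanishes on all of $\partial\Omega$ automatically delivers both the boundary preservation and the $j$-invariance required by the lifting, and the remainder of the argument is routine bookkeeping.
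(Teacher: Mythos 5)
Your argument is correct, and it rests on the same central tool as the paper's proof (the $L^p$ De Rham theorem of Amrouche--Girault), but the two bookend steps are executed differently, and in both places your version is the more careful one. The paper obtains the variational equation (\ref{34}) by testing (\ref{25}) with $\varphi=v_{\theta}\pm\phi$ for \emph{all} $\phi\in V^q_{0,div}$ and asserting that the Tresca terms cancel; this is only clear when $\phi$ vanishes on $\omega$, since elements of $V^q_{0,div}$ may have a nonzero tangential trace there and then $j(v_{\theta}\pm\phi)\neq j(v_{\theta})$ in general. By restricting to divergence-free $\psi\in(\mathcal{D}(\Omega))^3$ you sidestep this entirely. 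At the other end, the paper recovers (\ref{14}) for arbitrary $\varphi\in V^q$ by an appeal to ``Green's formula again,'' which silently requires re-justifying the boundary terms and the Tresca inequality; your explicit Bogovskii lifting ($\tilde w\in(W^{1,q}_0(\Omega))^3$ with $\operatorname{div}\tilde w=\operatorname{div}\varphi$, using $\int_\Omega\operatorname{div}(\varphi-v_\theta)\,dx=0$) makes this step, and the uniqueness of $\pi$, completely transparent. The only cost of your route is that you must invoke the continuity of the Bogovski\u{\i} operator (equivalently the Ne\v{c}as inequality) on a bounded Lipschitz domain, which is in any case the substance of the $L^p$ De Rham theorem the paper already cites; note also that the uniform bound of Lemma \ref{lem3.2}, which the paper uses to bound the functional $F$, is not actually needed for this theorem since $\theta$ is fixed.
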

\begin{proof}
Let $v_{\theta}$ be the solution of (\ref{25}).
By taking $\varphi=v_{\theta}\pm\phi$, for all $\phi\in V^q_{0,div}$, we obtain
from (\ref{25}) the following variational equation,
\begin{eqnarray}\label{34}
a\left(\theta,v_{\theta},\phi\right)=\left(f,\phi\right),\quad \forall \phi\in
V^q_{0,div}.
\end{eqnarray}
Consider the linear form $F$ defined on $V^{q}_{0}$ by
\begin{eqnarray*}
F\left(\phi\right)=a\left(\theta,v_{\theta},\phi\right)-\left(f,\phi\right).
\end{eqnarray*}
We prove that $F$ is continuous on $V^{q}_0$.
For all $\phi$ in $V^{q}_0$ we have
\begin{eqnarray*}
|F(\phi)| \leq 2
|\Omega|^{\frac{p-2}{p}}\mu_{1}\|v_{\theta}\|_{1,p}\|\phi\|_{1,p}+|\Omega|^{
\frac{p-2}{p}}\|f\|_{1,p}\|\phi\|_{1,q},
\end{eqnarray*}
by Lemma \ref{lem3.2} we have (\ref{33}) so
\begin{eqnarray*}
 |F(\phi)| \leq  |\Omega|^{\frac{p-2}{p}}\left(
2\mu_{1}C'+\left\|f\right\|_{1,p}\right)\left\|\phi\right\|_{1,q}.
\end{eqnarray*}
This shows the continuity of $F$, and that $F\in W^{-1, p}(\Omega)$. And since
\begin{eqnarray*}
F\left(\phi\right)=0, \quad \forall\phi\in V^q_{0,div},
\end{eqnarray*}
by De Rham's Theorem in \cite{Amrgir} page 116, we deduce the existence of a
unique $\pi\in L^p_{0}\left(\Omega\right)$, such that
\begin{eqnarray*}
F\left(\phi\right)=\left\langle \nabla\pi,\phi\right\rangle \quad \forall\phi\in
V^q_{0},
\end{eqnarray*}
using Green's formula we deduce that
\begin{eqnarray*}
2div\left(\mu(\theta,v_{\theta},|D(v_{\theta})|\right)D(v_{\theta})+f=\nabla
\pi.
\end{eqnarray*}
By multiplying this equality by $\varphi\in V^q$ and using Green's formula
again, we deduce that $\left(v_{\theta},\pi\right)\in V_{div}^p\times
L^p_{0}\left(\Omega\right)$ satisfies (\ref{14}).
\end{proof}

\subsection{Second intermediate problem}\label{sec3.2}
Recall that the temperature satisfies the variational equation (\ref{15}) with
(\ref{L}) and (\ref{B}).
We assume that the function $K$ is  also such that
\begin{eqnarray}\label{36}
\exists k_0, k_1 \in\R; \quad 0< k_0\leq K(x) \leq k_1, \quad \forall x\in
\Omega.
\end{eqnarray}
In fact we must take $p\geq 4$ and seek the solution $\theta$ in the subspace
$H^{1}_{\Gamma_{1}\cup\Gamma_{L}}\left(\Omega\right)$ of
$W^{1,q}_{\Gamma_{1}\cup\Gamma_{L}}\left(\Omega\right)$,
this choice will be justified below in the proof of Theorem 3.4.
Let us consider the second intermediate problem.
\begin{prob}\label{Sprob}
For given $v\in V^{p}_{div}$, find $\theta\in
H^{1}_{\Gamma_{1}\cup\Gamma_{L}}\left(\Omega\right)$
solution of the equation,
 \begin{eqnarray}\label{sp}
B\left(\theta,\psi\right)=L\left( \theta, \psi\right),
\quad \forall\psi \in H^{1}_{\Gamma_{1}\cup\Gamma_{L}}\left(\Omega\right).
\end{eqnarray}
\end{prob}
Remark that, from {\rm(\ref{L})}, $L$ depends on $v$. To study this nonlinear
problem we consider first the following corresponding linearized problem.
\begin{prob}\label{pro3.2}
For given $v\in V^p_{div}$ and $\eta\in
H^{1}_{\Gamma_{1}\cup\Gamma_{L}}\left(\Omega\right)$, find
$\theta\in H^{1}_{\Gamma_{1}\cup\Gamma_{L}}\left(\Omega\right)$ solution of the
equation,
\begin{eqnarray}\label{37}
B\left(\theta,\psi\right)=L\left( \eta, \psi\right),\quad \forall\psi \in
H^{1}_{\Gamma_{1}\cup\Gamma_{L}}\left(\Omega\right).
\end{eqnarray}
\end{prob}

\begin{theorem}\label{thm3.3}
Assume that $p\geq 4$, (\ref{36}) and the assumptions of {\rm Theorem
\ref{th3.1}} hold.
 Then for $\eta\in H^{1}_{\Gamma_{1}\cup\Gamma_{L}}\left(\Omega\right), v\in
V^p_{div}, \theta_\omega \in L^2\left(\omega\right)$ and $r\in
L^\infty\left(\R\right)$, there exists a unique $\theta \in
H^{1}_{\Gamma_{1}\cup\Gamma_{L}}\left(\Omega\right)$ solution for (\ref{37}).
\end{theorem}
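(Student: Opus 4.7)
The plan is to apply the Lax--Milgram theorem on the Hilbert space $H^{1}_{\Gamma_{1}\cup\Gamma_{L}}(\Omega)$, since this is exactly why the authors moved from the natural functional setting $W^{1,q}$ to $H^1$. So the work reduces to three verifications: (i) $B(\cdot,\cdot)$ is bilinear, continuous and coercive on $H^{1}_{\Gamma_{1}\cup\Gamma_{L}}(\Omega)$; (ii) $\psi \mapsto L(\eta,\psi)$ is a continuous linear form; and (iii) assembling to conclude existence and uniqueness.

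For the continuity of $B$, the diffusion part is immediate from $K\in L^\infty(\Omega)$ by (\ref{36}). For the convection part, the key point is that $v \in V^p_{div}$ with $p\geq 4 >3$, so by the Sobolev embedding $W^{1,p}(\Omega)\hookrightarrow L^\infty(\Omega)$ in dimension three one has $v_i \in L^\infty(\Omega)$; thus
\[
\left|\int_{\Omega}\psi\, v_i\partial_i\theta\, dx\right|
\leq \|v\|_{\infty}\,\|\psi\|_{2}\,\|\nabla\theta\|_{2},
\]
which gives continuity on $H^1\times H^1$. For coercivity, taking $\psi=\theta$ I would integrate by parts the convection term:
\[
\int_{\Omega}\theta\, v_i\partial_i\theta\, dx
=\tfrac{1}{2}\int_{\Omega}v_i\partial_i(\theta^2)\, dx
=-\tfrac{1}{2}\int_{\Omega}(\operatorname{div} v)\theta^2\, dx
+\tfrac{1}{2}\int_{\partial\Omega}(v\!\cdot\! n)\theta^2\, ds.
\]
The volume term vanishes since $\operatorname{div}(v)=0$, and the boundary term vanishes because $\theta=0$ on $\Gamma_{1}\cup\Gamma_{L}$ and $v\cdot n=0$ on $\omega$. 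Therefore $B(\theta,\theta)=\int_{\Omega}K|\nabla\theta|^2dx\geq k_{0}\|\nabla\theta\|_{2}^{2}$, and a Poincar\'e inequality on $H^{1}_{\Gamma_{1}\cup\Gamma_{L}}(\Omega)$ (valid since $|\Gamma_{1}\cup\Gamma_{L}|>0$) upgrades this to coercivity in the full $H^1$-norm.

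For the continuity of $L(\eta,\cdot)$, this is precisely where the hypothesis $p\geq 4$ plays its role. Since $D(v)\in L^p(\Omega)$, we have $|D(v)|^2 \in L^{p/2}(\Omega)\subset L^2(\Omega)$; combined with $\mu\in L^\infty$ from (\ref{23}) this gives
\[
\left|2\int_{\Omega}\mu(\eta,v,|D(v)|)|D(v)|^{2}\psi\, dx\right|
\leq 2\mu_{1}\bigl\|\,|D(v)|^{2}\bigr\|_{2}\|\psi\|_{2}.
\]
The term involving $r(\eta)$ is bounded by $\|r\|_{\infty}|\Omega|^{1/2}\|\psi\|_{2}$, and the boundary term is controlled by $\|\theta_{\omega}\|_{L^{2}(\omega)}\|\psi\|_{L^{2}(\omega)}$ via the continuous trace $H^{1}(\Omega)\to L^{2}(\omega)$. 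Summing, $L(\eta,\cdot)$ is continuous on $H^{1}_{\Gamma_{1}\cup\Gamma_{L}}(\Omega)$.

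With all hypotheses in place, Lax--Milgram delivers a unique $\theta\in H^{1}_{\Gamma_{1}\cup\Gamma_{L}}(\Omega)$ solving (\ref{37}). The main obstacle I expect is not computational but conceptual: one has to argue why the problem, although naturally stated in $W^{1,q}$, can be solved in the strictly smaller (for $q<2$) space $H^1$. Coercivity would fail in $W^{1,q}\times W^{1,p}$ since the form is not symmetric on a reflexive Banach pair amenable to Lax--Milgram, and this forces the choice $p\geq 4$ to ensure $|D(v)|^{2}\in L^{2}$, so that the right-hand side is continuous on $H^1$. The inclusion $H^{1}\subset W^{1,q}$ (since $1<q<2$ and $\Omega$ is bounded) then recovers the regularity required by the original weak formulation.
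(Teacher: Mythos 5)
Your proposal is correct and follows essentially the same route as the paper: verification of the Lax--Milgram hypotheses on $H^{1}_{\Gamma_{1}\cup\Gamma_{L}}(\Omega)$, with coercivity of $B$ obtained by integrating the convection term by parts (using $\operatorname{div}(v)=0$, $v\cdot n=0$ on $\omega$ and $\theta=0$ on $\Gamma_{1}\cup\Gamma_{L}$) and continuity of $L(\eta,\cdot)$ secured by $p\geq 4$ giving $|D(v)|^{2}\in L^{2}(\Omega)$. No substantive differences from the paper's argument.
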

\begin{proof}
The bilinear form $B$ is continuous on
$H^{1}_{\Gamma_{1}\cup\Gamma_{L}}\left(\Omega\right)\times
H^{1}_{\Gamma_{1}\cup\Gamma_{L}}\left(\Omega\right)$. Indeed, by (\ref{36}) and
since $v\in V^p_{div}\subset \left(L^\infty\left(\Omega\right)\right)^3$, there
exists $M>0$ such that
\begin{eqnarray*}
\left|B\left(\theta,\psi\right)\right|\leq
k_1\left\|\theta\right\|_{1,2}\left\|\psi\right\|_{1,2}+M\left\|\theta\right\|_{
1,2}\left\|\psi\right\|_2.
\end{eqnarray*}
Using the Poincar\'e's inequality we get, for some positive constant $C$,
\begin{eqnarray*}
\left|B\left(\theta,\psi\right)\right|\leq(k_1+MC)\left\|\theta\right\|_{1,2}
\left\|\psi\right\|_{1,2}.
\end{eqnarray*}
This shows the continuity of the form $B$.
Let us prove that $B$ is coercive. We have, for any $\theta\in
H^{1}_{\Gamma_{1}\cup\Gamma_{L}}\left(\Omega\right)$,
\begin{eqnarray*}
B\left(\theta,\theta\right)=\int_\Omega
K(x)\left|\nabla\theta\right|^2dx+\int_\Omega\theta v_i \partial_i \theta dx
\geq
k_0\int_\Omega\left|\nabla\theta\right|^2dx+\int_\Omega\frac{1}{2}
v_i\partial_i(\theta^2)dx.
\end{eqnarray*}
 By Green's formula, we obtain
\begin{eqnarray}\label{38}
\vartheta:=\int_\Omega v_i\partial_i(\theta^2) dx=\int_{\partial\Omega}\theta^2
v_i n_i ds-\int_\Omega\theta^2\partial_iv_idx.
\end{eqnarray}
Since $\theta=0$ on $\Gamma_{1}\cup\Gamma_{L}$, $n.v=0$ on $\omega$ and
$div(v)=0$ in $\Omega$ we deduce that
$\vartheta=0$, and then
\begin{eqnarray*}
B\left(\theta,\theta\right)\geq k_0\left\|\theta\right\|^2_{1,2}, \quad
\forall\theta\in H^{1}_{\Gamma_{1}\cup\Gamma_{L}}\left(\Omega\right).
\end{eqnarray*}
The coercivity of $B$ follows. We prove that the linear form $L(\eta, .)$ is
continuous on $H^{1}_{\Gamma_{1}\cup\Gamma_{L}}\left(\Omega\right)$.
 Let $\psi\in H^{1}_{\Gamma_{1}\cup\Gamma_{L}}\left(\Omega\right)$,
by the continuous embedding
$H^{1}_{\Gamma_{1}\cup\Gamma_{L}}\left(\Omega\right)\subset L^2(\omega)$ we
obtain
\begin{eqnarray*}
|L\left(\eta, \psi\right)| \leq
2\mu_1\int_\Omega\left|D\left(v\right)\right|^2\left|\psi\right|dx +
C\int_\Omega\left|\psi\right|dx
+\int_\omega\left|\theta_\omega\right|\left|\psi\right|dx',
\end{eqnarray*}
and by Poincar\'e's and H$\ddot{\mbox o}$lder's inequalities, we obtain
\begin{eqnarray*}
\left|L\left(\eta,
\psi\right)\right|\leq\left[2\mu_1C_1\left|\Omega\right|^\frac{p-4}{2p}
\left\|D(v)\right\|^2_{p}+C_2
\left|\Omega\right|^\frac{1}{2}+C_3\left\|\theta_\omega\right\|_{L^2(\omega)}
\right]\left\|\psi\right\|_{1,2}.
\end{eqnarray*}
As $p\geq 4$, this proves the continuity of $L(\eta,.)$, and by Lax-Milgram
Theorem, we deduce that
there exists a unique $\theta\in
H^{1}_{\Gamma_{1}\cup\Gamma_{L}}\left(\Omega\right)$
solution of the linearized problem \ref{pro3.2}.
\end{proof}

In the following theorem we prove only the existence of at least one solution to
the intermediate {\rm Problem \ref{Sprob}}.

\begin{theorem}\label{thm3.4}
Let $\theta_\omega \in L^2\left(\omega\right)$ and $r \in
L^\infty\left(\R\right)$.
Assume that functions $r$ and $t\mapsto\mu (t, ., .)$ are Lipschitzian.
Then with the same assumptions as {\rm Theorem \ref{thm3.3}}, there exists at
least one
$\theta \in H^{1}_{\Gamma_{1}\cup\Gamma_{L}}\left(\Omega\right)$ solution to the
intermediate {\rm Problem \ref{Sprob}}.
\end{theorem}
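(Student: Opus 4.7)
The plan is to apply Schauder's fixed point theorem to the solution operator of the linearized Problem \ref{pro3.2}. By Theorem \ref{thm3.3}, for every $\eta\in H^{1}_{\Gamma_{1}\cup\Gamma_{L}}(\Omega)$ there is a unique $\theta =: T(\eta)\in H^{1}_{\Gamma_{1}\cup\Gamma_{L}}(\Omega)$ satisfying $B(\theta,\psi)=L(\eta,\psi)$ for all $\psi$, and a fixed point of $T$ is exactly a solution of Problem \ref{Sprob}. Since $\mu$ and $r$ are bounded, the proof of Theorem \ref{thm3.3} goes through verbatim when $\eta$ is only assumed to lie in $L^{2}(\Omega)$, so I may extend $T$ to a map $L^{2}(\Omega)\to H^{1}_{\Gamma_{1}\cup\Gamma_{L}}(\Omega)\subset L^{2}(\Omega)$.

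First I derive an a priori bound. Testing the equation defining $T(\eta)$ with $\psi=T(\eta)$ and combining the coercivity estimate $B(\theta,\theta)\geq k_{0}\|\theta\|_{1,2}^{2}$ with the continuity bound for $L(\eta,\cdot)$ obtained in the proof of Theorem \ref{thm3.3} (which uses $p\geq 4$, $\mu\leq\mu_{1}$, $r\in L^{\infty}$, $\theta_{\omega}\in L^{2}(\omega)$ and $v\in V^{p}_{div}$), one finds $\|T(\eta)\|_{1,2}\leq R$ for some constant $R$ independent of $\eta$. Let $C_{P}$ denote the Poincar\'e constant and set $\Sigma=\{\eta\in L^{2}(\Omega):\|\eta\|_{2}\leq C_{P}R\}$; this is a nonempty, closed, convex, bounded subset of $L^{2}(\Omega)$ and $T(\Sigma)\subset\Sigma$.

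Next I verify continuity of $T:\Sigma\to\Sigma$ with respect to the $L^{2}$ topology. Let $\eta_{n}\to\eta$ in $L^{2}(\Omega)$ and set $\theta_{n}=T(\eta_{n})$. By the a priori bound the sequence $(\theta_{n})$ is bounded in $H^{1}$; extracting a subsequence I may assume $\theta_{n}$ converges weakly in $H^{1}$ and strongly in $L^{2}$ to some $\theta^{*}$, while $\eta_{n}\to\eta$ almost everywhere. For fixed $\psi\in H^{1}_{\Gamma_{1}\cup\Gamma_{L}}(\Omega)$, $B(\theta_{n},\psi)\to B(\theta^{*},\psi)$ by weak convergence in $H^{1}$, while the dominated convergence theorem yields $L(\eta_{n},\psi)\to L(\eta,\psi)$: the integrand $\mu(\eta_{n},v,|D(v)|)|D(v)|^{2}\psi$ converges a.e. by continuity of $\mu$ in its first argument and is dominated by $\mu_{1}|D(v)|^{2}|\psi|\in L^{1}$ (integrable since $|D(v)|^{2}\in L^{p/2}\subset L^{2}$ for $p\geq 4$), and likewise $r(\eta_{n})\psi$ is dominated by $\|r\|_{\infty}|\psi|\in L^{1}$. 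Passing to the limit gives $B(\theta^{*},\psi)=L(\eta,\psi)$, whence by the uniqueness clause of Theorem \ref{thm3.3} $\theta^{*}=T(\eta)$. The subsequence principle then gives $T(\eta_{n})\to T(\eta)$ in $L^{2}$.

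Compactness is immediate: $T(\Sigma)$ is bounded in $H^{1}(\Omega)$, hence relatively compact in $L^{2}(\Omega)$ by the Rellich--Kondrachov theorem. Schauder's fixed point theorem then furnishes $\theta\in\Sigma$ with $T(\theta)=\theta$; since $\theta$ lies in the range of $T$, automatically $\theta\in H^{1}_{\Gamma_{1}\cup\Gamma_{L}}(\Omega)$, and this is the desired solution of Problem \ref{Sprob}. I expect the most delicate point to be the passage to the limit in the nonlinear right-hand side $L(\eta_{n},\psi)$: this is precisely what motivates the continuity (in fact Lipschitz) hypothesis on $\mu$ and $r$ and the assumption $p\geq 4$, which together guarantee that the nonlinear integrands admit integrable majorants.
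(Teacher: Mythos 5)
Your proposal is correct, and it follows the same overall strategy as the paper (Schauder's fixed point theorem applied to the solution operator $T$ of the linearized Problem \ref{pro3.2}, whose well-posedness is Theorem \ref{thm3.3}, together with the a priori bound $\|T(\eta)\|_{1,2}\leq C^{*}$). The implementation, however, is genuinely different. The paper works directly in $H^{1}_{\Gamma_{1}\cup\Gamma_{L}}(\Omega)$: it subtracts the equations for $T(\eta_{1})$ and $T(\eta_{2})$, tests with $T(\eta_{1})-T(\eta_{2})$, and uses the Lipschitz hypotheses on $r$ and $t\mapsto\mu(t,\cdot,\cdot)$ together with the estimate $|Q|\leq 2|\Omega|^{(p-4)/(2p)}C_{\mu}\|\eta_{1}-\eta_{2}\|_{4}\|T(\eta_{1})-T(\eta_{2})\|_{4}\|D(v)\|_{p}^{2}$ and the compact embedding $H^{1}\subset L^{4}$ to conclude that $T$ is Lipschitz (and, via the intermediate $L^{4}$ bound, completely continuous) on $H^{1}$. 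You instead set up Schauder in $L^{2}(\Omega)$ on the ball $\Sigma$, where compactness of $T(\Sigma)$ is free from Rellich--Kondrachov and the $H^{1}$ a priori bound, and you obtain continuity by a weak-limit/uniqueness/subsequence argument using only the boundedness and continuity of $\mu$ and $r$ plus dominated convergence. Your route buys a weaker hypothesis at this step (mere continuity of $\mu(\cdot,\cdot,\cdot)$ in $t$ and of $r$ would suffice, the Lipschitz assumption being needed only later for uniqueness in Theorem \ref{thm3.5}) and sidesteps the slightly delicate point of verifying relative compactness of the image in the $H^{1}$ topology; the paper's route yields the stronger quantitative conclusion that $T$ is Lipschitz on $H^{1}$, which is reusable information. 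One small point to make explicit in your write-up: the fixed point produced in $L^{2}$ solves $B(\theta,\psi)=L(\theta,\psi)$ for all $\psi\in H^{1}_{\Gamma_{1}\cup\Gamma_{L}}(\Omega)$ precisely because $\theta=T(\theta)$ lies in the range of $T$, which you do note; this closes the argument.
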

\begin{proof}
For $\eta\in H^{1}_{\Gamma_{1}\cup\Gamma_{L}}\left(\Omega\right)$, Theorem
\ref{thm3.3} ensures the existence and the uniqueness of $\theta\in
H^{1}_{\Gamma_{1}\cup\Gamma_{L}}\left(\Omega\right)$ solution of the linearized
problem (\ref{37}). Then we can define the operator
\begin{eqnarray}\label{opT}
T:
H^{1}_{\Gamma_{1}\cup\Gamma_{L}}\left(\Omega\right)&\rightarrow&H^{1}_{\Gamma_{1
}\cup\Gamma_{L}}\left(\Omega\right)\nonumber \\
\eta&\mapsto&T\left(\eta\right)=\theta,
\end{eqnarray}
where $\theta$ is the unique solution of the linear problem \ref{pro3.2}.

We establish that $T$ is completely continuous. For given $\eta_1$ (resp.
$\eta_2$)
in $H^{1}_{\Gamma_{1}\cup\Gamma_{L}}\left(\Omega\right)$, we associate
$T(\eta_1)$ (resp. $T(\eta_2)$), the solution of the equation (\ref{37}). By
substraction we obtain,
\begin{eqnarray}\label{39}
B\left(T(\eta_1)-T(\eta_2), \psi\right)=L(\eta_1, \psi)-L(\eta_2, \psi),\quad
\forall \psi\in H^{1}_{\Gamma_{1}\cup\Gamma_{L}}\left(\Omega\right).
\end{eqnarray}
By taking $\psi=T(\eta_1)-T(\eta_2)$ in (\ref{39}) we get
\begin{eqnarray}\label{40}
\int_\Omega K(x)\left|\nabla \left(T(\eta_1)-T(\eta_2)\right)\right|^2dx+P=Q+R,
\end{eqnarray}
where
\begin{eqnarray*}
P&=&\int_\Omega\left(T(\eta_1)-T(\eta_2)\right)v_i\partial_i\left(T(\eta_1)-T(
\eta_2)\right)dx,\\Q&=&2\int_\Omega\left[\mu\left(\eta_1, v,
\left|D(v)\right|\right)-\mu\left(\eta_2, v,
\left|D(v)\right|\right)\right]\left[T(\eta_1)-T(\eta_2)\right]
\left|D(v)\right|^2dx,
\end{eqnarray*}
and
\begin{eqnarray*}
R=\int_\Omega
\left[r(\eta_1)-r(\eta_2)\right]\left[T(\eta_1)-T(\eta_2)\right]dx.
\end{eqnarray*}
Let us evaluate the terms $P$, $Q$ and $R$. By using the same arguments as in
(\ref{38}) we get
\begin{eqnarray}\label{41}
P=\frac{1}{2}\int_\Omega v_i \partial_i
\left[\left(T(\eta_1)-T(\eta_2)\right)^2\right]dx=\frac{1}{2}\int_\Omega
v_i\partial_i\left[\left(\theta_1-\theta_2\right)^2\right]dx=0.
\end{eqnarray}
For the trem $Q$ we write,
\begin{eqnarray}\label{42}
\left|Q\right|\leq2C_\mu
\int_\Omega\left|\eta_1-\eta_2\right|\left|T(\eta_1)-T(\eta_2)\right|\left|D(v)
\right|^2dx,
\end{eqnarray}
where $C_\mu$ is the Lipschitz constant of the function $t\mapsto\mu (t, .,
.)$.\newline\\ Note here that if we took $\theta$ and $\eta$ in
$W^{1,q}_{\Gamma_{1}\cup\Gamma_{L}}\left(\Omega\right)$, the integrand in
(\ref{42}) would not be necessairily in $L^1(\Omega)$, this forced us to take
$H^{1}_{\Gamma_{1}\cup\Gamma_{L}}\left(\Omega\right)$ instead of
$W^{1,q}_{\Gamma_{1}\cup\Gamma_{L}}\left(\Omega\right)$.\newline\\ Now we know
from Rellich-Kondrachov Theorem that
$H^{1}_{\Gamma_{1}\cup\Gamma_{L}}\left(\Omega\right)$ is compactly embedded in
$L^4\left(\Omega\right)$. Then by taking $\eta_i$ and $T(\eta_i)$ in
$L^4\left(\Omega\right)$ we get
\begin{eqnarray*}
\left|Q\right|\leq2 \left|\Omega\right|^{\frac{p-4}{2p}}C_\mu
\left\|\eta_1-\eta_2\right\|_4\left\|T(\eta_1)-T(\eta_2)\right\|_4\left\|D(v)
\right\|^{2}_{p}.
\end{eqnarray*}
Again by the compact embedding
$H^{1}_{\Gamma_{1}\cup\Gamma_{L}}\left(\Omega\right)\subset
L^4\left(\Omega\right)$, there exists a positive constant $C'$ depending only on
$\Omega$ such that
\begin{eqnarray}\label{43}
\left|Q\right|\leq2 \left|\Omega\right|^{\frac{p-4}{2p}}C_\mu
C'\left\|\eta_1-\eta_2\right\|_{1,2}\left\|T(\eta_1)-T(\eta_2)\right\|_{1,2}
\left\|D(v)\right\|^2_p.
\end{eqnarray}
Finally by H$\ddot {\mbox o}$lder's and Poincar\'e's inequalities
 we have
\begin{eqnarray}\label{44}
\left|R\right| \leq
C_r\left\|\eta_1-\eta_2\right\|_2\left\|T(\eta_1)-T(\eta_2)\right\|_2
\leq C_P
C_r\left\|\eta_1-\eta_2\right\|_{1,2}\left\|T(\eta_1)-T(\eta_2)\right\|_{1,2},
\end{eqnarray}
where $C_P$ and $C_r$ are respectively Poincar\'e's constant and Lipschitz's
constant of the function $r$. Now by (\ref{36}), (\ref{40}), (\ref{41}),
(\ref{43}) and (\ref{44}) we obtain
\begin{eqnarray*}
k_0\left\|T(\eta_1)-T(\eta_2)\right\|^2_{1,2}\leq\left(2\left|\Omega\right|^{
\frac{p-4}{2p}}C_\mu C'\left\|D(v)\right\|^2_p+C_P
C_r\right)\left\|\eta_1-\eta_2\right\|_{1,2}\left\|T(\eta_1)-T(\eta_2)\right\|_{
1,2},
\end{eqnarray*}
and then
\begin{eqnarray*}
k_0\left\|T(\eta_1)-T(\eta_2)\right\|_{1,2}\leq\left(2\left|\Omega\right|^{\frac
{p-4}{2p}}C_\mu C'\left\|D(v)\right\|^2_p+C_P
C_r\right)\left\|\eta_1-\eta_2\right\|_{1,2}.
\end{eqnarray*}
This proves that the operator $T$ is Lipschitzian. Let us now show that $T$ is
bounded in $H^{1}_{\Gamma_{1}\cup\Gamma_{L}}\left(\Omega\right)$.
 We know that for $\eta \in
H^{1}_{\Gamma_{1}\cup\Gamma_{L}}\left(\Omega\right)$, $T(\eta)$ is solution of
(\ref{37}), that is
\begin{eqnarray*}
B(T(\eta),\psi)=L(\eta, \psi),\quad \forall\psi\in
H^{1}_{\Gamma_{1}\cup\Gamma_{L}}\left(\Omega\right).
\end{eqnarray*}
By taking $\psi=T(\eta)$, and using (\ref{24}) and (\ref{36}) we obtain
\begin{eqnarray*}
k_0\int_\Omega\left|\nabla
T(\eta)\right|^2dx\leq2\mu_1\int_\Omega\left|D(v)\right|^2\left|T(\eta)\right|dx
+r_1\int_\Omega\left|T(\eta)\right|dx+\int_\omega\left|\theta_\omega
\right|\left|T(\eta)\right|dx,
\end{eqnarray*}
where $r_1=$ ess $\sup \left\{r(t), t\in\R\right\}$.
From the continuous embedding
$H^1_{\Gamma_{1}\cup\Gamma_{L}}\left(\Omega\right)\subset L^2(\omega)$, there
exists a positive constant $C''$ independent of $\eta$ such that
\begin{eqnarray*}
\left\|T(\eta)\right\|_{L^2(\omega)}\leq C''\left\|T(\eta)\right\|_{1,2},
\end{eqnarray*}
and by H$\ddot{\mbox{o}}$lder's and Poincar\'e's inequalities, we get
\begin{eqnarray}\label{45}
k_0
\left\|T(\eta)\right\|^2_{1,2}&\leq&2\left|\Omega\right|^{\frac{p-4}{2p}}\mu_1
C_p \left\|D(v)\right\|^2_p\left\|T(\eta)\right\|_{1,2}\nonumber\\&+&r_1
C_p\left\|T(\eta)\right\|_{1,2}+C''\left\|\theta_\omega
\right\|_{L^2(\omega)}\left\|T(\eta)\right\|_{1,2}.
\end{eqnarray}
The boundedness of $T$ follows from (\ref{45}),
$\left\|T(\eta)\right\|_{1,2}\leq C^*$,
where
\begin{eqnarray*}
C^*=k^{-1}_0\left(2\left|\Omega\right|^{\frac{p-4}{2p}}\mu_1 C_p
\left\|D(v)\right\|^2_p+C''\left\|\theta_\omega \right\|_{L^2(\omega)}+C_p
r_1\right).
\end{eqnarray*}
Now according to Schauder Fixed Point Theorem we deduce that the operator $T$
has at least
one fixed point $\theta\in H^1_{\Gamma_{1}\cup\Gamma_{L}}\left(\Omega\right)$,
solution of the variational problem (\ref{Sprob}).
\end{proof}

In the following theorem, we prove the uniqueness of the solution to the
intermediate
{\rm Problem \ref{Sprob}}.

\begin{theorem}\label{thm3.5}
Let $\theta_\omega \in L^2\left(\omega\right)$ and $r \in
L^\infty\left(\R\right)$.
Assume that functions $r$ and $t\mapsto\mu (t, ., .)$ are Lipschitzian and
nonincreasing.
So with the same assumptions as in {\rm Theorem \ref{thm3.3}},
 the solution of  the intermediate
{\rm Problem \ref{Sprob}} is unique.
\end{theorem}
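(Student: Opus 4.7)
The plan is a standard monotonicity argument. I would take two solutions $\theta_{1},\theta_{2}\in H^{1}_{\Gamma_{1}\cup\Gamma_{L}}(\Omega)$ of Problem \ref{Sprob}, subtract the two variational equations to obtain
\begin{eqnarray*}
B(\theta_{1}-\theta_{2},\psi)=L(\theta_{1},\psi)-L(\theta_{2},\psi), \quad \forall \psi\in H^{1}_{\Gamma_{1}\cup\Gamma_{L}}(\Omega),
\end{eqnarray*}
and then test with the admissible choice $\psi=\theta_{1}-\theta_{2}$.

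For the left-hand side, I would first handle the convective contribution $\int_{\Omega}(\theta_{1}-\theta_{2})v_{i}\partial_{i}(\theta_{1}-\theta_{2})\,dx$ exactly as in (\ref{38}): write it as $\frac{1}{2}\int_{\Omega}v_{i}\partial_{i}[(\theta_{1}-\theta_{2})^{2}]\,dx$, apply Green's formula, and use that $\theta_{1}-\theta_{2}=0$ on $\Gamma_{1}\cup\Gamma_{L}$, $v\cdot n=0$ on $\omega$ and $\operatorname{div}(v)=0$ in $\Omega$ to kill the term. The diffusive part is then bounded below, thanks to (\ref{36}), by $k_{0}\|\theta_{1}-\theta_{2}\|_{1,2}^{2}$.

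For the right-hand side, the boundary integral $\int_{\omega}\theta_{\omega}\psi\,dx'$ is independent of $\theta$ and therefore cancels upon subtraction, leaving only
\begin{eqnarray*}
L(\theta_{1},\psi)-L(\theta_{2},\psi)=2\!\int_{\Omega}\!\!\bigl[\mu(\theta_{1},v,|D(v)|)-\mu(\theta_{2},v,|D(v)|)\bigr]|D(v)|^{2}(\theta_{1}-\theta_{2})\,dx+\!\int_{\Omega}\!\![r(\theta_{1})-r(\theta_{2})](\theta_{1}-\theta_{2})\,dx.
\end{eqnarray*}
Here is where the nonincreasing hypothesis enters: since $t\mapsto\mu(t,\cdot,\cdot)$ and $r$ are nonincreasing, both integrands are pointwise nonpositive (each is a product of $\theta_{1}-\theta_{2}$ with an opposite-signed difference), so this whole expression is $\le 0$.

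Combining the two estimates yields $k_{0}\|\theta_{1}-\theta_{2}\|_{1,2}^{2}\le 0$, hence $\theta_{1}=\theta_{2}$. The only non-routine point is recognising that the Lipschitz hypothesis is not what drives uniqueness (it was used for compactness/continuity of $T$ in Theorem \ref{thm3.4}); what is actually used here is the monotonicity assumption on $\mu$ and $r$ with respect to the temperature variable, which guarantees the correct sign of the nonlinear perturbation after testing with $\theta_{1}-\theta_{2}$. No smallness condition on $v$ or on the Lipschitz constants is needed because the convective term vanishes identically and the nonlinear temperature-dependent terms have the favourable sign.
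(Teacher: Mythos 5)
Your proof is correct, but it takes a genuinely different and more elementary route than the paper. The paper does not test with $\psi=\theta_{1}-\theta_{2}$; instead it uses the truncation $f_{\delta}(t)=(1-\delta/t)^{+}$ of Br\'ezis--Kinderlehrer--Stampacchia and Chipot--Michaille, tests (\ref{46}) with $\psi=f_{\delta}(\Theta)$, discards the nonlinear terms by the same sign argument you use, obtains a bound on $\int_{\Omega}\bigl|\nabla\ln\bigl(1+(\Theta-\delta)^{+}/\delta\bigr)\bigr|^{2}dx$ that is uniform in $\delta$, and lets $\delta\to 0$ to conclude $\Theta\le 0$ a.e., finishing by exchanging $\theta_{1}$ and $\theta_{2}$. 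Your direct energy test is legitimate here because Problem \ref{Sprob} is posed with solutions and test functions in the same space $H^{1}_{\Gamma_{1}\cup\Gamma_{L}}(\Omega)$, so $\Theta$ is admissible; the convective term vanishes exactly as in (\ref{38}); the coercivity $B(\Theta,\Theta)\ge k_{0}\|\Theta\|_{1,2}^{2}$ was already established in Theorem \ref{thm3.3}; and the integrability of $[\mu(\theta_{1},\cdot)-\mu(\theta_{2},\cdot)]|D(v)|^{2}\Theta$ follows from $p\ge 4$ and the boundedness of $\mu$. What the paper's heavier machinery buys is robustness: the $f_{\delta}$ argument yields a one-sided comparison principle and would survive in settings where the difference of solutions is not an admissible test function (e.g.\ only $W^{1,q}$ regularity with $q<2$) or where the form $B$ is not coercive on the natural space; in the present $H^{1}$ framework your shorter argument suffices. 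Your closing observation is also accurate: neither proof actually uses the Lipschitz hypothesis on $r$ and $t\mapsto\mu(t,\cdot,\cdot)$ for uniqueness --- only the nonincreasing property and boundedness matter --- the Lipschitz condition being needed earlier for the continuity of the fixed-point map $T$ in Theorem \ref{thm3.4}.
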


\begin{proof}
Indeed, suppose in contrary that there exist two solutions $\theta_1$ and
$\theta_2$ for (\ref{Sprob}).
By substracting, we obtain for all $\psi\in
H^1_{\Gamma_{1}\cup\Gamma_{L}}\left(\Omega\right)$,
\begin{eqnarray}\label{46}
\int_{\Omega}K(x) \nabla\Theta.\nabla\psi dx+\int_{\Omega} \psi
v_{i}\partial_{i}\Theta
dx&=&2\int_{\Omega}\left[\mu\left(\theta_1,v,
\left|D\left(v\right)\right|\right)-\mu\left(\theta_2,v,
\left|D\left(v\right)\right|\right)\right]\left|D\left(v\right)\right|^2 \psi
dx\nonumber\\&+&\int_{\Omega}
\left(r\left(\theta_1\right)-r\left(\theta_2\right)\right)\psi dx,
\end{eqnarray}
where $\Theta=\theta_1-\theta_2$. Now we use the real function $f_{\delta}$ (see
eg \cite{br, chp}) defined for $\delta>0$ by
\begin{eqnarray*}
f_{\delta}(t)=
\left\{
\begin{array}{ll}
(1-\frac{\delta}{t})^{+} \quad if \quad t>0,\\
 \quad 0 \quad \quad \quad if \quad t\leq0,
\end{array}
\right.
\end{eqnarray*}
with $A^+=\max(A,0)$. As $\Theta \in
H^1_{\Gamma_{1}\cup\Gamma_{L}}\left(\Omega\right)$ then $f_\delta(\Theta)\in
H^1_{\Gamma_{1}\cup\Gamma_{L}}\left(\Omega\right)$ and
\begin{eqnarray*}
\nabla f_\delta(\Theta)=\frac{\delta}{\Theta^2}\chi_{[\Theta>\delta]}\nabla
\Theta,
\end{eqnarray*}
here $\chi_{[\Theta>\delta]}$ is the indicator function of the set
$[\Theta>\delta]=\left\{x\in\Omega,\quad \Theta(x)>\delta\right\}$, that is
\begin{eqnarray*}
\chi_{[\Theta>\delta]}(x)=1\quad if\quad \Theta(x)>\delta,\quad
\chi_{[\Theta>\delta]}(x)=0 \quad if \quad \Theta(x)\leq\delta.
\end{eqnarray*}
As $\psi=0$ on $\Gamma_{1}\cup\Gamma_{L}$, $v.n=0$ on $\omega$ and $div(v)=0$ in
$\Omega$, we have
 \begin{eqnarray}\label{h}
\int_{\Omega} \psi v_{i}\partial_{i}\Theta dx=-\int_{\Omega} \Theta v.\nabla\psi
dx.
\end{eqnarray}
Taking $\psi=f_\delta(\Theta)$ in (\ref{46}) and using (\ref{h}) we get
\begin{eqnarray}\label{47}
\delta\int_{\Omega\cap \left[\Theta>\delta\right]}K(x)
\left|\frac{\nabla\Theta}{\Theta}\right|^2dx&=&2\int_{\Omega\cap
\left[\Theta>\delta\right]}\frac{\mu\left(\theta_1,v,
\left|D\left(v\right)\right|\right)-\mu\left(\theta_2,v,
\left|D\left(v\right)\right|\right)}{\theta_1-\theta_2}
\left|D\left(v\right)\right|^2 \left(\Theta-\delta\right)
dx\nonumber\\&+&\int_{\Omega\cap
\left[\Theta>\delta\right]}\frac{r\left(\theta_1\right)-r\left(\theta_2\right)}{
\theta_1-\theta_2}\left(\Theta-\delta\right) dx+\delta\int_{\Omega\cap
\left[\Theta>\delta\right]} v \frac{\nabla\Theta}{\Theta} dx.
\end{eqnarray}
Since the functions $r$ and $t\mapsto\mu (t, ., .)$ are nonincreasing then
\begin{eqnarray}\label{Lip}
\frac{\mu\left(\theta_1,v,\left|D\left(v\right)\right|\right)-\mu\left(\theta_2,
v,\left|D\left(v\right)\right|\right)}{\theta_1-\theta_2}\leq0, \quad \mbox{and}
\quad
\frac{r\left(\theta_1\right)-r\left(\theta_2\right)}{\theta_1-\theta_2}\leq0.
\end{eqnarray}
 Recall that $v\in V^{p}_{div} \subset
\left(L^{\infty}\left(\Omega\right)\right)^3$, so there exists a positive
constant $M$ independent of $\delta$ such that
\begin{eqnarray*}
\int_{\Omega\cap \left[\Theta>\delta\right]} v \frac{\nabla\Theta}{\Theta}
dx\leq M\int_{\Omega\cap \left[\Theta>\delta\right]}
\left|\frac{\nabla\Theta}{\Theta}\right|dx.
\end{eqnarray*}
Now by (\ref{36}), (\ref{Lip}) and Cauchy-Schwarz inequality, (\ref{47}) becomes
\begin{eqnarray*}
k_0\int_{\Omega\cap \left[\Theta>\delta\right]}
\left|\frac{\nabla\Theta}{\Theta}\right|^2dx\leq M\int_{\Omega\cap
\left[\Theta>\delta\right]} \left|\frac{\nabla\Theta}{\Theta}\right|dx\leq
M\left|\Omega\right|^{\frac{1}{2}}\left(\int_{\Omega\cap
\left[\Theta>\delta\right]}\left|\frac{\nabla\Theta}{\Theta}\right|^2dx\right)^{
\frac{1}{2}}.
\end{eqnarray*}
Then
\begin{eqnarray}\label{48}
\left(\int_\Omega\left|\nabla
\ln\left(1+\frac{(\Theta-\delta)^+}{\delta}\right)\right|^2dx\right)^\frac{1}{2}
&=&\left(\int_{\Omega\cap
[\Theta>\delta]}\left|\frac{\nabla\Theta}{\Theta}\right|^2dx\right)^{\frac{1}{2}
}\nonumber\\&\leq& Mk^{-1}_0 \left|\Omega\right|^{\frac{1}{2}}.
\end{eqnarray}
The right hand side of (\ref{48}) is independent of $\delta$,
then for $\delta\rightarrow0$ we must obtain, $\Theta=\theta_1-\theta_2\leq0$
a.e. in $\Omega$, and by permuting the roles of $\theta_1$ and $\theta_2$
 we get $\theta_2-\theta_1\leq0$ then $\theta_1=\theta_2$.
This ends the proof of uniqueness of the temperature.
\end{proof}

\subsection{Existence result for the coupled problem 2.1}\label{sec3.3}
We recall here the necessary assumptions to ensure the existence of at least one
solution to the coupled problem \ref{prob2.1}.

We assume that, the real number $p\geq 4$, the function $\mu$ satisfies
(\ref{23}) and (\ref{24}),
 the function $K$ satisfies (\ref{36}),
the exterior force vector $f\in(W^{1,p}(\Omega))^3$,
the upper limit for the stress $0\leq k\in L^p(\omega)$,
the given fixed flux $\theta_\omega$ on $\omega$ is in $L^{2}(\omega)$,
the real function $r \in L^{\infty}(\R)$, we also suppose that functions
$r$ and $t\mapsto\mu(t, ., .)$ are Lipschitzian and nonincreasing.

\begin{theorem}\label{thm3.6}
Under the above assumptions, there exists a unique
$\theta \in H^{1}_{\Gamma_{1}\cup\Gamma_{L}}\left(\Omega\right)$ solution to
{\rm Problem \ref{Sprob}} and there exists at least one
$\left(v_{\theta}, \pi_{\theta}\right)\in V^p_{div}\times
L^p_0\left(\Omega\right)$
satisfying the variational inequality {\rm(\ref{14})}.
\end{theorem}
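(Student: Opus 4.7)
The plan is to obtain the coupled solution by a Schauder fixed point argument on the temperature, leveraging the single-valued solution operator of the second intermediate Problem \ref{Sprob} (Theorems \ref{thm3.3}--\ref{thm3.5}) together with the $\theta$-uniform bound in $V^p_{div}$ from Lemma \ref{lem3.2}.

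First I would set up the fixed-point operator. Let $\Psi: V^p_{div} \to H^{1}_{\Gamma_{1}\cup\Gamma_{L}}(\Omega)$ denote the single-valued solution map of Problem \ref{Sprob}. For each $\eta \in H^{1}_{\Gamma_{1}\cup\Gamma_{L}}(\Omega)$, Theorem \ref{th3.1} furnishes at least one $v_\eta \in V^p_{div}$ solving Problem \ref{prob3.1}; I would fix a selection $\eta \mapsto v_\eta$ and define $\mathcal{S}(\eta) := \Psi(v_\eta)$. Any fixed point $\theta = \mathcal{S}(\theta)$, together with its associated $v_\theta$ and the pressure $\pi_\theta$ from Theorem \ref{th3.2}, then yields a solution of Problem \ref{prob2.1}, and uniqueness of $\theta$ within such a coupled solution reduces to Theorem \ref{thm3.5}.

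Next I would produce an invariant convex compact set. By Lemma \ref{lem3.2}, $\|v_\eta\|_{1,p} \le C'$ uniformly in $\eta$, and the \textit{a priori} estimate at the end of the proof of Theorem \ref{thm3.4} (inequality (\ref{45})) gives $\|\Psi(w)\|_{1,2} \le C^{*}$ whenever $\|w\|_{1,p} \le C'$, with $C^{*}$ depending only on the data. Hence $\mathcal{S}$ maps the closed convex ball $K := \{\eta \in H^{1}_{\Gamma_{1}\cup\Gamma_{L}}(\Omega) : \|\eta\|_{1,2} \le C^{*}\}$ into itself. Viewed as a subset of $L^{2}(\Omega)$, this ball is compact by Rellich--Kondrachov, which is the natural arena in which to run Schauder.

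The main obstacle will be continuity of $\mathcal{S}$ in the $L^{2}$ topology, since the map $\eta \mapsto v_\eta$ is only set-valued. Given $\eta_n \to \eta$ in $L^{2}$, the sequence $(v_{\eta_n})$ is bounded in $V^{p}_{div}$, so up to extraction $v_{\eta_n} \rightharpoonup \bar v$ weakly in $W^{1,p}$ and strongly in $L^{r}$ for every finite $r$. Using the pseudo-monotonicity of $A$ from Lemma \ref{lem3.1} together with a Minty-type argument, I would pass to the limit in the variational inequality (\ref{25}) and identify $\bar v$ as a solution of Problem \ref{prob3.1} for temperature $\eta$. Setting $\theta_n := \Psi(v_{\eta_n})$, the bound $\|\theta_n\|_{1,2} \le C^{*}$ gives a further subsequence with $\theta_n \rightharpoonup \theta^{\ast}$ weakly in $H^{1}$ and strongly in $L^{2}$; the strong convergence $v_{\eta_n} \to \bar v$ in every $L^{r}$, combined with the continuity (and Lipschitz property) of $\mu$ and $r$, then allows passage to the limit in (\ref{sp}) and forces $\theta^{\ast} = \Psi(\bar v)$. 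Since $\Psi$ is single-valued, the whole sequence $\mathcal{S}(\eta_n)$ converges to $\mathcal{S}(\eta)$ in $L^{2}$, independently of the chosen selection. Schauder's theorem then provides a fixed point $\theta \in K$, and combining with Theorems \ref{th3.1}--\ref{thm3.5} completes the existence claim for Problem \ref{prob2.1}.
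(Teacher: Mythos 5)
Your proposal is a genuinely different (and more explicit) treatment of the coupling than the paper's: the paper's own proof simply chains Theorems \ref{th3.1}, \ref{th3.2}, \ref{thm3.4} and \ref{thm3.5} and reuses the fixed point of the operator $T$ of (\ref{opT}), which is defined for a \emph{fixed} velocity $v$; you instead set up an outer Schauder iteration $\eta\mapsto v_\eta\mapsto\Psi(v_\eta)$ that genuinely closes the loop between (\ref{14}) and (\ref{15}). That is the right instinct, but your continuity step has two concrete gaps. First, the selection $\eta\mapsto v_\eta$ is the problem, not $\Psi$: the operator $A$ of Lemma \ref{lem3.1} is only monotone, not strictly monotone, so solutions of Problem \ref{prob3.1} are not known to be unique. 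Your Minty argument only identifies the weak limit $\bar v$ of $v_{\eta_n}$ as \emph{some} solution for the temperature $\eta$, and nothing forces $\bar v$ to equal the solution $v_\eta$ you selected; hence $\mathcal{S}(\eta_n)\to\Psi(\bar v)$, which may differ from $\mathcal{S}(\eta)=\Psi(v_\eta)$, and the single-valuedness of $\Psi$ cannot repair this. To make the scheme work you would need either uniqueness for Problem \ref{prob3.1}, or a set-valued fixed point theorem (Kakutani/Ky Fan--Glicksberg) applied to the map $\eta\mapsto\{\Psi(v):v\ \mbox{solves (\ref{25}) for}\ \eta\}$, using that the solution set of (\ref{25}) is convex and closed (which does follow from Minty's lemma, since $A$ is monotone hemicontinuous and $j$ is convex).

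Second, even granting convergence of the velocities, passing to the limit in (\ref{sp}) is not justified as written: the right-hand side $L$ of (\ref{L}) contains the dissipation term $\int_\Omega\mu(\cdot)\left|D(v_{\eta_n})\right|^2\psi\,dx$, and your convergences ($v_{\eta_n}\rightharpoonup\bar v$ weakly in $W^{1,p}$ and strongly in $L^r$) do not give $\left|D(v_{\eta_n})\right|^2\to\left|D(\bar v)\right|^2$ in any sense strong enough to pass to the limit; weak convergence of gradients is compatible with $\liminf\int\left|D(v_{\eta_n})\right|^2\psi>\int\left|D(\bar v)\right|^2\psi$. You would need strong convergence of $D(v_{\eta_n})$ in $L^p$ (or at least $L^2$), which typically requires an additional argument exploiting the structure of the variational inequality (e.g.\ testing (\ref{25}) with $\bar v$ and using strict monotonicity or a uniform convexity of $s\mapsto\mu(\cdot,\cdot,s)s$), none of which is available under hypotheses (\ref{23})--(\ref{24}) alone. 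Both gaps occur precisely at the point where the paper itself is silent, so neither route as written fully establishes the coupled existence claim.
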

\begin{proof}
For all $\eta \in H^{1}_{\Gamma_{1}\cup\Gamma_{L}}\left(\Omega\right)\subset
W^{1 , q}_{\Gamma_{1}\cup\Gamma_{L}}(\Omega)$,
because $1<q<2$,
by {\rm Theorem \ref{th3.1}},
 there exists $v=v_{\eta}$ in $V^{p}_{div}$ and by {\rm Theorem \ref{th3.2}}
there exists  $\pi=\pi_{\eta}$ in $L^{p}_{0}(\Omega)$
solution to the  variational inequality
\begin{eqnarray}
 a\left(\eta, v_\eta, \varphi-v_{\eta}\right)-\left(\pi_{\eta},
div\left(\varphi\right) \right) + j\left(\varphi \right)-j\left(v_{\eta} \right)
\geq \left(f, \varphi-v_{\eta}\right)
 \qquad \forall \varphi\in V^{q},
\end{eqnarray}
also by {\rm Theorems \ref{thm3.4}-\ref{thm3.5}}, there exists a unique  $\theta
\in H^{1}_{\Gamma_{1}\cup\Gamma_{L}}\left(\Omega\right)$,
 solution to the  {\rm Problem \ref{Sprob}}. So we can use the oprerator $T$
defined by (\ref{opT}). By {\rm Theorem \ref{thm3.4}}
we know that $T$ has at least one fixed point $\theta \in
H^{1}_{\Gamma_{1}\cup\Gamma_{L}}\left(\Omega\right)$, $\theta=T(\theta)$, which
is solution to the {\rm Problem \ref{Sprob}}. Then
$\left(\theta, v_{\theta}, \pi_{\theta}\right)$ is solution to the {\rm Problem
\ref{prob2.1}}.
\end{proof}

 \begin{rem}
To our knowledge, the uniqueness of the problem {\rm\ref{prob2.1}} remains an open question.
 \end{rem}

\section*{Acknowledgments}
The authors would like to thank the anonymous referee.


\end{document}